\newtheorem{theorem}{Theorem}
\newtheorem{corollary}{Corollary}
\newtheorem{lemma}{Lemma}
\theoremstyle{definition}
\newtheorem{remark}{Remark}
\newtheorem{definition}{Definition}
\newtheoremstyle{break}{3pt}{3pt}{}{}{\bfseries}{.}{\newline}{}
\theoremstyle{break}
\tikzset{
block/.style = {draw, fill=white, rectangle, minimum height=2.5em, minimum width=3em},
tmp/.style  = {coordinate},
sum/.style= {draw, fill=white, circle, node distance=1cm},
input/.style = {coordinate},
output/.style= {coordinate},
pinstyle/.style = {pin edge={to-,thin,black}}
}
\title{\LARGE \bf
Reference Governors and Maximal Output Admissible Sets for Linear Periodic Systems
}
\author{Hamid~R.~Ossareh,~\IEEEmembership{Member,~IEEE}
\thanks{Hamid R. Ossareh is an assistant professor in the department of Electrical and Biomedical Engineering,   University of Vermont. E-mail:
        {\tt\small hamid.ossareh@uvm.edu}}}
\begin{document}

\maketitle

\begin{abstract}
In this paper, we consider the problem of constraint management in Linear Periodic (LP) systems using Reference Governors (RG). First, we present the periodic-invariant maximal output admissible sets for LP systems. We extend the earlier results in the literature to Lyapunov stable LP systems with output constraints, which arise in RG applications. We show that, while the invariant sets for these systems may not be finitely determined, a finitely-determined inner approximation can be obtained by constraint tightening. We further show that these sets are related via simple transformations, implying that it suffices to compute only one of them for real-time applications. This greatly reduces the memory burden of RG, at the expense of an increase in processing requirements. We present a thorough analysis of this trade-off. In the second part of this paper, we discuss two RG formulations and discuss their feasibility criteria and algorithms for their computation. Numerical simulations demonstrate the efficacy of the approach.

%
%
\end{abstract}

\begin{IEEEkeywords}
Constrained systems, linear periodic systems, predictive control, reference governors, invariant sets, maximal output admissible sets
\end{IEEEkeywords}

\IEEEpeerreviewmaketitle

\section{Introduction}

Linear periodic systems arise in many important practical applications such as aerospace \cite{Psiaki2001, Johnson2012, Wisniewski1999}, wind turbines \cite{Dugundji1983}, xerography \cite{Ching2010,Eun2013}, economic systems \cite{DeLima2001}, and multi-rate systems \cite{Vaidyanathan1993,Gondhalekar2009}. The study of periodic systems dates back to the 1800s, when Faraday studied periodic differential equations arising in physics \cite{Faraday1831}. The most rigorous theory of linear periodic systems is due to Floquet in 1883, who described stability conditions and invariant representations of periodic systems \cite{Floquet1883}.

In the control literature, several approaches for control of periodic systems have been developed. Controllability and eigenvalue assignment are addressed in \cite{Brunovsky1969, Kabamba1986}. Methods for stabilization are provided in \cite{Bittanti1985,Colaneri1998,DeSouza2000,Bittanti2001}. Reference \cite{Bittanti2000} contains a detailed summary of invariant representations of periodic systems. For a complete modern treatment of periodic systems and their control methods, please see \cite{BittantiSergio2009}. Recently, with the availability of high speed computers, many optimization-based and predictive control methods have been developed for periodic systems  \cite{Gondhalekar2009, Bohm2009, Freuer2010, Nguyen2014a,Nguyen2014}.

In the literature of linear time-invariant (LTI) systems, new predictive methods have emerged since the 1990s to specifically address the issue of constraint management. One such  method is the so-called Reference Governor (RG) \cite{Gilbert1991a, Gilbert1999,Kolmanovsky2014,Kalabic2015}, which is motivated by the challenge in designing controllers that achieve both tracking and constraint management. Model predictive control is applicable, but it is either difficult to calibrate by non-experts or is computationally expensive for real-time applications. As a result, many control designers decouple the problems of tracking and constraint management and approach them in a modular manner. This decoupling typically leads to a loss of performance. Reference governor theory aims to reduce this loss of performance by addressing constraint management in a systematic manner. Due to these benefits of RG, it is of interest to develop a rigorous RG theory for linear periodic systems.

A block diagram of a system with RG is shown in Fig. \ref{fig:RGblock}, where Closed-Loop Plant refers to the plant together with a pre-designed linear tracking controller, the output $y(t)$ refers to the constrained output (not the tracking output), $r(t)$ refers to the reference signal(s) to be tracked, $v(t)$ is the governed reference, and $x$ is the measured state (or an estimate obtained from an observer). The idea behind RG is to first characterize the set of all initial states and inputs that prevent constraint violation for all times. This set is referred to as the maximal output admissible set (MAS) and is computed offline. The reference governor then solves a computationally inexpensive linear program at every timestep to select an input such that the system state belongs to MAS and, therefore, satisfies the constraints for all times. 

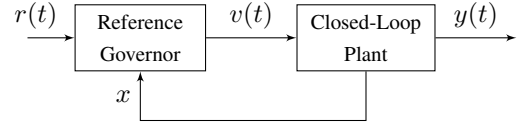
\begin{figure}
\centering
\begin{tikzpicture}[auto, node distance=1.5cm,>=latex']
    \node [input, name=rinput] (rinput) {};
    \node [block, right of=rinput,text width=1.5cm,align=center] (controller) {{\footnotesize Reference Governor}};
    \node [block, right of=controller,node distance=3cm,text width=1.6cm,align=center] (system)
{{\footnotesize Closed-Loop Plant}};
    \node [output, right of=system, node distance=2cm] (output) {};
    \node [tmp, below of=controller,node distance=1.1cm] (tmp1){$s$};
    \draw [->] (rinput) -- node{\hspace{-0.4cm}$r(t)$} (controller);
    \draw [->] (controller) -- node [name=v]{$v(t)$}(system);
    \draw [->] (system) -- node [name=y] {$y(t)$}(output);
    \draw [->] (system) |- (tmp1)-| node[pos=0.75] {$x$} (controller);
    \end{tikzpicture}
\caption{Reference governor block diagram} \label{fig:RGblock}
\end{figure}

In the literature for linear periodic systems, MAS has been previously characterized for asymptotically stable systems. Specifically, it is shown in  \cite{Freuer2010, Nguyen2014a,Nguyen2014,Blanchini1993} that, in contrast to LTI systems, which only require one set to describe MAS, periodic systems require $N$ sets, where $N$ is the system period. However, the relationship between these sets remains unexplored. Furthermore, the case of systems that are Lyapunov stable but not asymptotically stable, which arises in RG applications, has not been addressed. \textit{The goal of this paper is to develop an RG theory for linear periodic systems and address the above limitations of the current literature.} The original contributions of this paper are as follows:
\begin{itemize}
\item Development of MAS for Lyapunov stable periodic systems with output constraints;
\item Development of an inner approximation of MAS for Lyapunov stable periodic systems that is finitely determined (i.e., can be computed in finite time);
\item Investigation of geometric and algebraic properties of MAS for periodic systems -- it is shown that the $N$ sets that describe MAS are related via simple transformations, with the implication that only one of them needs to be stored in memory for real-time applications. This greatly reduces the memory requirements of  RG, at the expense of an increase in processing requirements;
\item Analysis of the above trade-off between memory and processing requirements (i.e., ``space-time'' trade-off, as commonly referred to in the computer science community);
\item Formulation of two reference governor designs for periodic systems, and investigation of their feasibility criteria, as well as algorithms for their computation. Numerical simulations are used to compare the two designs.
\end{itemize}

The paper is organized as follows. Section II describes the system equations, discusses stability and invariant representations of periodic systems, and states the standing assumptions in the paper. Section III presents the results on the maximal output admissible sets. Section IV presents the results on the reference governor formulations. Section V concludes with a brief summary and topics for future work.

Notations throughout this paper are standard. $\mathbb{R}$ and $\mathbb{Z}^+$ denote the set of real numbers and non-negative integers, respectively. All matrices are assumed to have real entries. The identity matrix in $\mathbb{R}^{n\times n}$ is denoted by $I_n$. The $i$-th standard basis vector in $\mathbb{R}^n$ (i.e., vector of all 0's except 1 in the $i$-th place), is denoted by $e_i$. The vector of all 1's (of appropriate dimensionality) is denoted by $\textbf{1}$, and the matrix, or vector, of all 0's (of appropriate dimensionality) is denoted by $\textbf{0}$. Superscript $^\top$ denotes matrix transpose.  The Euclidean spaces are equipped with the standard 2-norm: $\|x\|^2 = x^T x$. Open and closed sets are defined by the topology induced by the 2-norm. The interior of a set $A$ is denoted by int$(A)$.
All vector inequalities of the form $Ax\leq b$ are to be interpreted component-wise.

\section{Preliminaries}

Consider the discrete-time linear $N$-periodic system
\begin{equation}\label{eq:sys}
\begin{aligned}
x(t+1) &= A(t) x(t)\\
y(t) &= C(t) x(t)
\end{aligned}
\end{equation}
where $t \in \mathbb{Z}^+$ is the discrete time variable, $x(t)\in \mathbb{R}^n$ is the state, $y(t) \in \mathbb{R}^p$ is the constrained output, and matrices $A(t) \in \mathbb{R}^{n \times n}$ and $C(t) \in \mathbb{R}^{p \times n}$ are periodic with period $N$, i.e., $A(t+N) = A(t)$ and $C(t+N) = C(t)$. We assume the system starts from the initial condition $x(t_0)=x_0$. Systems with inputs, which arise in RG applications, can also be represented in this form by using an appropriate dynamic model of the inputs, as shown in Section IV.

In the sequel, we refer to the intra-period time-steps as \textit{timeslots}. Specifically, the system is in the $k$-th timeslot if $t$ satisfies
$
mod (t,N) = k.
$
To simplify notation, define $A_k$ and $C_k$ to be $A(t)$ and $C(t)$ during the $k$-th timeslot, i.e.,
$$
A_k \triangleq A(k), \, C_k \triangleq C(k), \,\,\quad k=0,\ldots,N-1.
$$
Clearly, $A(t) = A_{mod(t,N)}$ and $C(t) = C_{mod(t,N)}$. 

The output, $y(t)$, is required to satisfy the constraint
\begin{equation}\label{eq:constraints}
y(t) \in \mathbb{Y}_k, \quad k = mod(t,N)
\end{equation}
for all $t \geq t_0$, with $\mathbb{Y}_k$ being a collection of constraint sets.

\subsection{Stability}
In general, stability of periodic systems cannot be inferred from individual $A_k$'s; rather, stability must be analyzed by studying the system behavior over one period. Specifically, let $\Phi(t,\tau)$ denote the state transition matrix \cite{Chen2012} of the system at time $t \in \mathbb{Z}^+$, starting from time $\tau \in \mathbb{Z}^+$, $\tau<t$. The state transition matrix over one period, i.e., $\Phi(\tau+N, \tau)$, is referred to as the {\it monodromy} matrix. The eigenvalues of the monodromy matrix, $\lambda_i$, are referred to as the {\it characteristic multipliers} and do not depend on the starting time $\tau$. The periodic system is asymptotically stable iff the characteristic multipliers satisfy $|\lambda_i| < 1$. In the sequel,  we denote the monodromy matrix starting at timeslot 0 by $\Phi$:
\begin{equation}\label{eq:monodromy}
\Phi = A_{N-1} \cdots A_1 A_0.
\end{equation}

\subsection{System lifting}
A convenient approach for analysis and design of periodic systems is to convert them to a time-invariant representation. This is accomplished by {\it lifting} the system into a space with a higher input/output dimensionality (for details, see \cite{Bittanti2000}). Specifically, define the lifted state, $\xi$, as the system state sampled once every period and the lifted output, $\widetilde{y}$, as the vectorized system outputs over one period. Since there are $N$ timeslots, there are $N$ distinct lifted systems. For example, sampling the states at timeslot 0, we obtain:\par
{\small
\begin{equation}\label{eq:liftedy}
\begin{aligned}
\xi(t) &\triangleq x(tN),\\
\widetilde{y}(t)&\triangleq [y(tN)^\top, y(tN+1)^\top, \ldots, \, y(tN + N-1)^\top]^\top.
\end{aligned}
\end{equation}}
The dynamics of the lifted system are described by
\begin{equation}\label{eq:lifted}
\begin{aligned}
\xi(t+1) &= \Phi \xi(t) \\
\widetilde{y}(t) &= C \xi(t)
\end{aligned}
\end{equation}
where $\Phi$ is the monodromy matrix defined in \eqref{eq:monodromy} and
\begin{equation}\label{eq:C}
C = \left[ \begin{array}{c} C_0 \\ C_1A_0 \\ \vdots \\ C_{N-1}A_{N-2}\cdots A_0 \end{array} \right].
\end{equation}
Clearly, the lifted system is a time-invariant system, whose output is exactly the outputs of the original system over one period stacked in a vector. Since the system matrix of \eqref{eq:lifted} is  the monodromy matrix, it follows that the lifted system is stable iff the original system is stable. While computations are usually more efficient on the original periodic system, the lifted system is useful for proving theoretical results.
%

\subsection{Assumptions}
The standing assumptions of this paper are listed below. Note that these assumptions are extensions of the standard assumptions in the literature of reference governors for LTI systems (for more details, please see \cite{Freuer2010,Gilbert1991a, Gilbert1999}).

\textbf{(A1)} We assume that the system matrices are in the form:
\begin{equation}\label{eq:formofAk}
A_k = \left[ \begin{array}{cc} A_{k,s} & A_{k,c} \\ \textbf{0} & I_d \end{array} \right], \,\, C_k = [C_{k,s}\,\,\,\, C_{k,c}],
\end{equation}
where $d\in \mathbb{Z}^+$ indicates the number of eigenvalues at $\lambda = 1$ with equal algebraic and geometric multiplicities. This form arises in predictive methods such as reference governors, wherein the inputs are held constant over the prediction horizon (see  Section IV). It is easy to show that, given the form of $A_k$ and $C_k$ above,  lifted system \eqref{eq:lifted} is also triangular and has the form
\begin{equation}\label{eq:phiform}
\Phi = \left[ \begin{array}{cc}\Phi_s & \Phi_c \\ \textbf{0} & I_d \end{array} \right], \,\, C = [C_s\,\,\,\, C_c],
\end{equation}
where the partitioning of $\Phi$ and $C$ is dimensionally consistent. We further assume that:

\textbf{(A2)} All eigenvalues of $\Phi_s$ are inside the open unit disk, which implies that the periodic system is Lyapunov stable. Note that unstable systems are not considered because reference governors are applied to closed loop systems that have been pre-stabilized with a tracking controller (see Fig. \ref{fig:RGblock});

\textbf{(A3)} The pair $(C, \Phi)$ is observable;

\textbf{(A4)} The constraint sets $\mathbb{Y}_0, \ldots, \mathbb{Y}_{N-1}$ are compact polytopes defined by:
\begin{equation}\label{eq:constraintPoly}
\mathbb{Y}_k = \{y: S_k y \leq \textbf{1} \},
\end{equation}
where $S_k \in \mathbb{R}^{q_k \times p}$, $q_k \in \mathbb{Z}^+$. Note that, without loss of generality, the right hand side of \eqref{eq:constraintPoly} is taken to be \textbf{1}, which satisfies $0 \in \mathrm{int}(\mathbb{Y}_k)$. For the lifted system \eqref{eq:lifted}, the output constraint set is the Cartesian product of the individual constraint sets:
$
\widetilde{y} \in \mathbb{Y} \triangleq \mathbb{Y}_{0} \times \cdots \times \mathbb{Y}_{N-1},
$
which, given \eqref{eq:constraintPoly}, becomes:
\begin{equation}\label{eq:S}
\mathbb{Y} = \{\widetilde{y}: S \widetilde{y} \leq \textbf{1} \},
\end{equation}
where $S$ is the block diagonal matrix with $S_0,\ldots,S_{N-1}$ on the diagonal. 


\section{Periodic Invariant Sets}

The concept of invariant sets is central to many predictive control design techniques, including reference governors (see, e.g., \cite{Gilbert1991a, Gilbert1999}). For LTI systems, the set of all initial conditions that lead to constraint satisfaction for all time is an invariant set referred to as the maximal output admissible set (MAS) \cite{Gilbert1991a}. In periodic systems, the situation is more complicated, because the set depends on the timeslot from which the dynamics start. As a result, periodic systems possess $N$ invariant sets that are cyclically related, as discussed below. In \cite{Freuer2010, Nguyen2014a}, these $N$ sets have been characterized for asymptotically stable periodic systems subject to state constraints. However, the case of Lyapunov stable systems, which is central to reference governors, and systems with \textit{output} constraints were not investigated. Furthermore, the geometric and algebraic relationship between these sets remain unexplored. This section addresses these issues.

\begin{definition}
The sets $\Omega_k,$ $k=0, \ldots, N-1$, are called the {\it maximal output-admissible sets} for system \eqref{eq:sys} if, starting from any initial condition in $\Omega_k$, and any initial time in the $k$-th timeslot, the output satisfies the constraints for all future times, that is, $\forall x(t_0) \in \Omega_k$ with $t_0$ satisfying $mod(t_0,N)=k$, we have that  $y(t) \in \mathbb{Y}_{mod(t,N)}, \forall t \geq t_0$.
\end{definition}
From Definition 1, it follows that $\Omega_k$ are cyclically related:
{\small
\begin{equation}\label{eq:cyclically}
\begin{aligned}
&x \in \Omega_k \Leftrightarrow C_k x \in \mathbb{Y}_k \,\,\mathrm{and}\,\, A_k x \in \Omega_{k+1}, \,\,\, 0\leq k\leq N-2\\
&x \in \Omega_{N-1} \Leftrightarrow C_{N-1} x \in \mathbb{Y}_{N-1} \,\,\mathrm{and}\,\, A_{N-1} x \in \Omega_{0}.
\end{aligned}
\end{equation}
}%
The above also implies that $\Omega_k$'s are periodic invariant sets for system \eqref{eq:sys}, because, starting from $\Omega_k$, the system state returns to $\Omega_k$ after one period.

To determine $\Omega_k$, define the sequence of sets $\{\Omega_k^j\}_{j=k}^{\infty}$:
$$
\Omega_k^j = \{x_0: x(k)=x_0, y(t) \in \mathbb{Y}_{mod(t,N)}, k \leq t \leq j\}.
$$
Since $\Omega_k^j$ are bounded and decreasing, the limit set exists; $\Omega_k$ is precisely the limit set: 
\begin{equation}\label{eq:omegak}
\Omega_k = \bigcap_{j \geq k} \Omega_k^j. 
\end{equation}
The computation of $\Omega_k$ above is based on expressing the output $y(t)$ explicitly as a function of the initial state, $x(k) = x_0$. From \eqref{eq:sys}, we have that $y(t) = C_{mod(t,N)} \left(\prod_{i=k}^{t-1} A_{mod(i,N)}\right) x_0$, $t \geq k$, where  $\prod$ denotes the left matrix multiplication. Thus, $y(t) \in \mathbb{Y}_{mod(t,N)}$ can be expressed as $S_{mod(t,N)} C_{mod(t,N)} \left(\prod_{i=k}^{t-1} A_{mod(i,N)}\right) x_0 \leq \textbf{1}$.  To make the above arguments clearer, we specialize the above to the case of $\Omega_0$: \par
\begin{equation}\label{eq:O1}{\small
\Omega_0 = \left\{x_0: \left[ \begin{array}{c} S_{0}C_{0} \\ S_{1}C_{1} A_0  \\ \vdots \\ S_{0}C_{0}A_{N-1} \cdots A_1 A_{0}  \\ S_{1}C_{1}A_0 A_{N-1} \cdots A_1 A_{0} \\ \vdots \end{array} \right] x_0 \leq  \textbf{1} \right\}.}
\end{equation}
Similar expressions can be found for $\Omega_1,\ldots, \Omega_{N-1}$. 

An important question arises regarding the computation of $\Omega_k$. In order to characterize and store $\Omega_k$ on a computer, it is important for the left hand matrix of \eqref{eq:O1} to be finite-dimensional. As it turns out, this is indeed the case, as shown in the theorems that follow.

\begin{definition}
The set $\Omega_k$ is said to be {\it finitely determined} if there exists a finite $j^*$ such that $\forall j \geq j^*$, $\Omega_k^{j} = \Omega_k^{j^*}$. We call $j^*$ the {\it admissibility index}.
\end{definition}

From this definition, it follows that if such $j^*$ exists, then $\Omega_k = \Omega_k^{j^*}$.

\begin{theorem}\label{thm:1}
Suppose system \eqref{eq:sys} is asymptotically stable (i.e., $d=0$ in \eqref{eq:phiform}) and assumptions (A1)-(A4) hold. Then, $\Omega_k$, $k=0,\ldots,{N-1}$, are finitely determined. In addition, $\Omega_k$ are compact sets and satisfy $0 \in \mathrm{int}(\Omega_k)$.
\end{theorem}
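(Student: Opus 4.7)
My strategy is to reduce the claim for $\Omega_0$ to the classical LTI maximal output admissible set theorem of Gilbert and Tan \cite{Gilbert1991a} by exploiting the lifted representation \eqref{eq:lifted}, and then propagate the result to $\Omega_1,\ldots,\Omega_{N-1}$ via the cyclic relations \eqref{eq:cyclically}. With $d=0$, assumption (A2) makes $\Phi=\Phi_s$ Schur, so the lifted system is asymptotically stable.

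First I would handle $\Omega_0$. Because $x(tN)=\Phi^t x_0$ and the stacked output $\widetilde y(t)=C\Phi^t x_0$ collects $y(tN),y(tN+1),\ldots,y(tN+N-1)$, the requirement $y(t)\in\mathbb{Y}_{\mathrm{mod}(t,N)}$ for every $t\ge 0$ is equivalent to the single polyhedral condition $SC\Phi^t x_0\le\mathbf{1}$ for all $t\ge 0$, with $S$ as in \eqref{eq:S}. Hence $\Omega_0$ coincides with the classical LTI MAS for the lifted pair $(C,\Phi)$ subject to $\widetilde y\in\mathbb{Y}$. By (A3) this pair is observable, and by (A4) the set $\mathbb{Y}$ is a compact polytope with $0\in\mathrm{int}(\mathbb{Y})$; together with Schur stability of $\Phi$, these are exactly the hypotheses of \cite{Gilbert1991a}. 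Their theorem then yields that $\Omega_0$ is finitely determined, compact, and contains the origin in its interior.

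For $k\ge 1$ I would either repeat the argument after lifting at timeslot $k$, or propagate via \eqref{eq:cyclically}. The propagation route is cleanest: since $\Omega_{N-1}=\{x:C_{N-1}x\in\mathbb{Y}_{N-1}\}\cap A_{N-1}^{-1}\Omega_0$, and $\Omega_0$ is defined by finitely many linear inequalities, so is $\Omega_{N-1}$; iterating the recursion backward through $\Omega_{N-2},\ldots,\Omega_1$ shows each $\Omega_k$ is described by finitely many linear inequalities, i.e.\ finitely determined. Compactness of every $\Omega_k$ then follows by noting that $x\in\Omega_k$ forces $C_k x, C_{k+1}A_k x,\ldots$ all to lie in compact sets, and the observability matrix of $(C,\Phi)$ appears (up to invertible transformations involving the $A_j$) among these coefficient matrices, so the full stack has trivial kernel and $\Omega_k$ is bounded. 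Finally, $0\in\mathrm{int}(\Omega_k)$ follows because $0$ is a fixed point of each $A_j$ and lies in the interior of each $\mathbb{Y}_j$, so $0$ strictly satisfies every one of the finitely many defining inequalities.

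The main technical point to watch is that (A3) is stated for the pair $(C,\Phi)$ obtained by lifting at timeslot $0$, whereas the alternative (direct) argument for $\Omega_k$ would use the analogous lifted pair starting at timeslot $k$; I would avoid this subtlety by taking the propagation route above, which only uses (A3) through its consequences for $\Omega_0$. Beyond that, the result is essentially a straightforward corollary of the LTI MAS theory once the lifting has been performed, so I do not expect a serious obstacle.
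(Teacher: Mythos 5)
For $\Omega_0$ your argument is exactly the paper's: identify $\Omega_0$ with the MAS of the lifted pair $(C,\Phi)$, check that (A2)--(A4) supply the hypotheses of the Gilbert--Tan theorem, and read off finite determinism, compactness, and $0\in\mathrm{int}(\Omega_0)$. For $k\ge 1$ you deliberately diverge: the paper re-lifts the system at timeslot $k$ and ``applies similar arguments,'' whereas you propagate backward through the cyclic relations \eqref{eq:cyclically}. Your route is attractive because it makes the admissibility indices of the other sets explicit (at most $j^*+N-1$, which is essentially the content of the paper's Corollary~1) and because, as you note, it avoids having to claim observability of the lifted pair at every timeslot, which (A3) only asserts for timeslot $0$. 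The finite-determinism and $0\in\mathrm{int}(\Omega_k)$ parts of your propagation are sound: the finite inequality description you obtain for $\Omega_{N-1}$ is precisely the truncation $\Omega_{N-1}^{N+j^*}$ of the defining sequence, and $0$ strictly satisfies each of the finitely many inequalities $M_i x\le\mathbf{1}$.

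The gap is in your compactness step. You argue that the observability matrix of $(C,\Phi)$ appears among the coefficient matrices of $\Omega_k$ ``up to invertible transformations involving the $A_j$,'' but nothing in (A1)--(A4) makes the $A_j$ invertible. What actually appears in the stack for $\Omega_k$ is the observability matrix of $(C,\Phi)$ right-multiplied by $A_{N-1}\cdots A_k$, together with the leading rows $C_k,\,C_{k+1}A_k,\ldots$; the combined kernel is $\ker(A_{N-1}\cdots A_k)\cap\ker\bigl[C_k^\top,\,(C_{k+1}A_k)^\top,\ldots\bigr]^\top$, which can be nontrivial when the product of the $A_j$ is singular (e.g.\ $n=1$, $N=2$, $A_1=0$, $C_1=0$, $C_0=1$ gives $(C,\Phi)$ observable yet $\Omega_1=\mathbb{R}$). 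So boundedness of $\Omega_k$ for $k\ne 0$ requires, in effect, observability of the lifted pair at timeslot $k$ --- exactly the hypothesis you were trying to avoid. To be fair, the paper's own proof has the same unstated assumption (re-lifting at timeslot $k$ and invoking Gilbert--Tan tacitly presumes that lifted pair is observable), so your proposal is no weaker than the published argument; but you should either strengthen (A3) to observability at every timeslot (or invertibility of the $A_j$), or explicitly flag that compactness of $\Omega_k$, $k\ne 0$, rests on this additional condition.
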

\begin{proof}
First, assume that the dynamics start from timeslot 0. With this assumption,  lifted system \eqref{eq:lifted} and the original system, \eqref{eq:sys}, describe the same outputs; therefore, MAS of \eqref{eq:lifted} is exactly $\Omega_0$. So, we prove the theorem for the MAS of \eqref{eq:lifted}. To do so, note that \eqref{eq:lifted} is asymptotically stable because \eqref{eq:sys} is (by the assumption in the theorem). We now invoke the results in \cite{Gilbert1991a}, which state that MAS for an asymptotically stable LTI system is always finitely determined. In addition, \cite{Gilbert1991a} shows that if the system is Lyapunov stable and observable, and the constraint set is compact and contains the origin in its interior, then MAS is also compact and contains the origin in its inerior. This completes the proof for $\Omega_0$. To prove the resutlts for $\Omega_k$, $k=1,\ldots,N-1$, we lift the system by sampling the state at the $k$-th timeslot and apply similar arguments.
\end{proof}


If the periodic system is not asymptotically stable (i.e., $d\geq 1$ in \eqref{eq:phiform}), a finitely determined inner approximation of $\Omega_k$ can be obtained by tightening the steady state constraint. This is a generalization of the results for LTI systems reported in \cite{Gilbert1991a}. The novelty here is that, since the system is periodic, steady state must be considered for the lifted system \eqref{eq:lifted}, \eqref{eq:phiform}. 
To simplify notation, define the matrix $\Gamma$:
\begin{equation}\label{eq:gamma}
\Gamma \triangleq [\textbf{0} \,\quad C_s(I-\Phi_s)^{-1}\Phi_c + C_c].
\end{equation}
The steady state output of the lifted system is given by $
\widetilde{y}_{ss} \triangleq \lim_{t\rightarrow \infty} \widetilde{y}(t) = \Gamma x_0.$ Define the set $\Omega_{ss}^\epsilon$ that corresponds to the tightened constraint $\widetilde{y}_{ss} \in (1-\epsilon) \mathbb{Y}$, for some $\epsilon \in (0, 1)$:
\begin{equation}\label{eq:tightened}
\Omega_{ss}^\epsilon = \{x_0: S\Gamma x_0 \leq (1-\epsilon)\textbf{1} \}.
\end{equation}
Then, $\Omega_k^\epsilon$ defined below is an inner approximation of $\Omega_k$ and is finitely determined:
\begin{equation}\label{eq:tightened2}
\Omega_k^\epsilon =  \Omega_{ss}^\epsilon  \cap \Omega_k.
\end{equation}
To prove this fact, we need the following lemma:
\begin{lemma}
The set $\Omega_{ss}^\epsilon$ defined in \eqref{eq:tightened} is independent of the timeslot at which the state is sampled in the lifting process.
\end{lemma}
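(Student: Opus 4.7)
The plan is to show that $\Omega_{ss}^\epsilon$ is defined in terms of an intrinsic invariant of the periodic system -- the steady-state periodic orbit determined by the constant modes of the state -- and that this invariant is independent of which timeslot one chooses to lift from. Concretely, I will argue that $\Gamma^{(k)} x_0$, the lifted steady-state output obtained by lifting at timeslot $k$, always lists the same $N$ scalar outputs $y_{ss,0}(x_c), \ldots, y_{ss,N-1}(x_c)$ up to a cyclic permutation that is absorbed by the corresponding cyclic permutation of $S$, so that the inequality defining $\Omega_{ss}^\epsilon$ encodes the same $N$ scalar constraints regardless of $k$.

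First, I would exploit the triangular structure in (A1). Partition $x = [x_s^\top, x_c^\top]^\top$ with $x_c \in \mathbb{R}^d$; since the bottom block of every $A_k$ equals $[\textbf{0},\,I_d]$, the component $x_c$ is conserved along any trajectory and has the same value regardless of which timeslot it is read at. Under (A2), the stable sub-state driven by the constant forcing $x_c$ converges as $t \to \infty$ to a unique periodic orbit whose value at timeslot $j$ depends only on $x_c$, not on the transient or the starting timeslot; call the resulting scalar output $y_{ss,j}(x_c) = C_j \bar x_j(x_c)$. Second, I would verify that the lifted steady-state equation $\xi_{ss}^{(k)} = \Phi^{(k)} \xi_{ss}^{(k)}$ for lifting at timeslot $k$ has stable part $(I - \Phi_s^{(k)})^{-1}\Phi_c^{(k)} x_c$, hence $\Gamma^{(k)} x_0 = C^{(k)} \xi_{ss}^{(k)}$ stacks the outputs in the cyclic order $[y_{ss,k}(x_c)^\top,\,y_{ss,k+1}(x_c)^\top,\,\ldots,\,y_{ss,k+N-1}(x_c)^\top]^\top$ (indices mod $N$). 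Because $S^{(k)} = \mathrm{diag}(S_k, S_{k+1},\ldots,S_{k+N-1})$ is cyclically permuted to match, the constraint $S^{(k)} \Gamma^{(k)} x_0 \leq (1-\epsilon)\textbf{1}$ is merely a reordering of the $N$ blocks $\{S_j y_{ss,j}(x_c) \leq (1-\epsilon)\textbf{1}\}_{j=0}^{N-1}$. Since $x_c$ consists of the same $d$ coordinates of $x_0$ for every $k$, the resulting subset of $\mathbb{R}^n$ is identical across lifting choices.

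The main obstacle is purely bookkeeping: relating $(\Phi_s^{(k)}, \Phi_c^{(k)}, C^{(k)})$ for different $k$ and tracking which scalar output lands in which row of $\Gamma^{(k)} x_0$. Once this is tabulated, the cyclic structure of lifting together with the Cartesian-product form of $\mathbb{Y}$ makes the invariance of $\Omega_{ss}^\epsilon$ immediate.
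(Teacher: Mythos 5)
Your proposal is correct and follows essentially the same route as the paper's proof: both arguments reduce the claim to the fact that the steady-state output trajectory is a single $N$-periodic sequence determined by the conserved modes, so that lifting at timeslot $k$ merely produces a cyclic permutation of the rows of $S\Gamma$, which leaves the set $\Omega_{ss}^\epsilon$ unchanged. Your version is somewhat more explicit than the paper's (which argues directly from $\widetilde{y}(t+1)=\widetilde{y}(t)$ as $t\to\infty$) in that it tracks the conserved component $x_c$ and the permutation of the blocks of $S$, but the underlying idea is identical.
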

\begin{proof}
Let $\widetilde{y}$ and $\widetilde{z}$ be the lifted outputs with the state sampled at timeslot 0 and timeslot $k$, $1\leq k\leq N-1$:
$$
\widetilde{y} = [y(tN)^\top, \ldots, \, y(tN + N-1)^\top]^\top 
$$
$$
\widetilde{z} = [y(tN+k)^\top, \ldots, \, y(tN + k + N-1)^\top]^\top. 
$$
Lyapunov stability implies that $\widetilde{y}(t+1)=\widetilde{y}(t)$ as $t \rightarrow \infty$. This observation, together with the definition of $\widetilde{y}$, implies that $y(t)$ is $N$-periodic. Therefore, $\widetilde{z}(t)$ must be a permutation of $\widetilde{y}(t)$. This suggests that lifting the system by sampling the state at a different timeslot leads to a reshuffling of the inequalities in \eqref{eq:tightened}, which does not change the set. 
\end{proof}

\begin{theorem}
For all $\epsilon \in (0,1)$, the sets $\Omega_0^\epsilon, \ldots, \Omega^{\epsilon}_{N-1}$,  are finitely determined.
\end{theorem}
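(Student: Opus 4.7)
The plan is to mirror the proof of Theorem~\ref{thm:1}: lift the system to an LTI representation and invoke a finite-determination result from \cite{Gilbert1991a}. The only twist is that, since the system is merely Lyapunov stable (not asymptotically stable), the appropriate reference result is Gilbert-Tan's Lyapunov-stable version, whose hypothesis is precisely a tightened steady-state constraint — and this is exactly what $\Omega_{ss}^\epsilon$ encodes.

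First, fix $k \in \{0,\ldots,N-1\}$ and lift \eqref{eq:sys} by sampling the state at timeslot $k$, obtaining an LTI system with monodromy matrix $\Phi^{(k)} = A_{k-1}\cdots A_0 A_{N-1}\cdots A_k$ and lifted output matrix $C^{(k)}$. Because each $A_j$ has the block-upper-triangular form (A1), any product inherits the same structure, so $\Phi^{(k)}$ has a stable block $\Phi_s^{(k)}$ and a bottom-right $I_d$ block; the eigenvalues of $\Phi_s^{(k)}$ are the nontrivial characteristic multipliers, which are independent of the starting timeslot, and hence $\Phi_s^{(k)}$ is Schur by (A2). The lifted constraint $\widetilde{y}\in\mathbb{Y}$ is a compact polytope containing the origin in its interior by (A4), and observability of $(C^{(k)},\Phi^{(k)})$ follows from (A3) via a short argument (PBH at $\lambda = 1$, or the output-reshuffling idea already used in the proof of Lemma~1).

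Second, I would identify the Gilbert-Tan tightened steady-state set for this lifting with $\Omega_{ss}^\epsilon$. The lifted steady-state output map takes the form $\Gamma^{(k)} x_0$, and the associated tightened set $\{x_0 : S^{(k)} \Gamma^{(k)} x_0 \leq (1-\epsilon)\textbf{1}\}$ coincides with $\Omega_{ss}^\epsilon$ from \eqref{eq:tightened} for every $k$ by Lemma~1 (the two sets differ only by a permutation of rows). Consequently, $\Omega_k^\epsilon = \Omega_{ss}^\epsilon \cap \Omega_k$ coincides with the MAS of the lifted LTI system intersected with its Gilbert-Tan tightened steady-state set.

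Finally, I would invoke the finite-determination theorem of \cite{Gilbert1991a} for Lyapunov-stable observable LTI systems subject to compact output constraints and a tightened steady-state condition. It supplies a finite lifted admissibility index $M^*$ beyond which all constraints are redundant. Since one lifted time step corresponds to $N$ original time steps, $\Omega_k^\epsilon$ is finitely determined in the original periodic system with admissibility index at most $k + N M^*$. The main obstacle I anticipate is ensuring cleanly that observability and the Schur property pass from the timeslot-$0$ lifting to the timeslot-$k$ lifting for $k \geq 1$; this is precisely where the cyclic-permutation argument from Lemma~1 pays off and avoids any need for invertibility of individual $A_j$'s.
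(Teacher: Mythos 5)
Your proposal is correct and follows essentially the same route as the paper: lift at timeslot $k$, identify the tightened steady-state set with $\Omega_{ss}^\epsilon$ via Lemma~1, and invoke the Gilbert--Tan finite-determination result for Lyapunov-stable LTI systems with a tightened steady-state constraint. The only difference is that you spell out the verification of the hypotheses for the timeslot-$k$ lifting (inherited triangular structure, Schur stable block, observability), which the paper leaves implicit by asserting that the timeslot-$0$ argument ``holds without modification.''
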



\begin{proof}
We first prove that $\Omega_0^\epsilon$ is finitely determined, using arguments involving the lifted system \eqref{eq:lifted}. Similar to the proof of Theorem \ref{thm:1}, assume that the dynamics start from  timeslot 0. Then, MAS for \eqref{eq:lifted} is exactly $\Omega_0$. In \cite{Gilbert1991a}, it is shown that the MAS for a Lyapunov stable LTI system of the form \eqref{eq:lifted}, \eqref{eq:phiform} is finitely determined if the steady state constraint is tightened. Therefore, the MAS for \eqref{eq:lifted}, \eqref{eq:phiform} with tightened steady state \eqref{eq:tightened} is finitely determined. Hence, $\Omega_0^\epsilon$ is finitely determined. 

To prove finite determinism of $\Omega_k^\epsilon$, $k=1,\ldots,N-1$, we lift the system by sampling the state at the $k$-th timeslot.  According to Lemma 1, $\Omega_{ss}$ also defines a tightened steady state constraint for this lifted system. Therefore, the arguments in the above paragraph hold without modification. This completes the proof.
\end{proof}

\begin{figure*}
    \centering
    \begin{subfigure}[t]{0.3\textwidth}
        \centering
        \includegraphics[width=\textwidth]{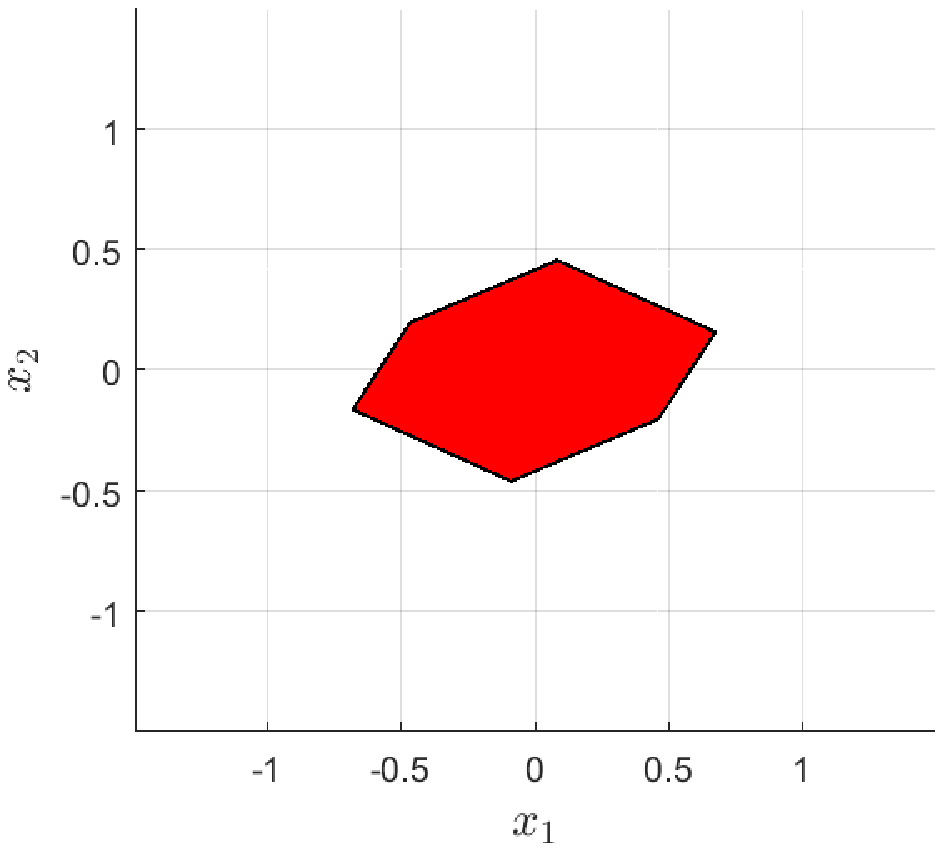}
        \caption{$\Omega_0$}
    \end{subfigure}%
    ~ 
    \begin{subfigure}[t]{0.3\textwidth}
        \centering
        \includegraphics[width=\textwidth]{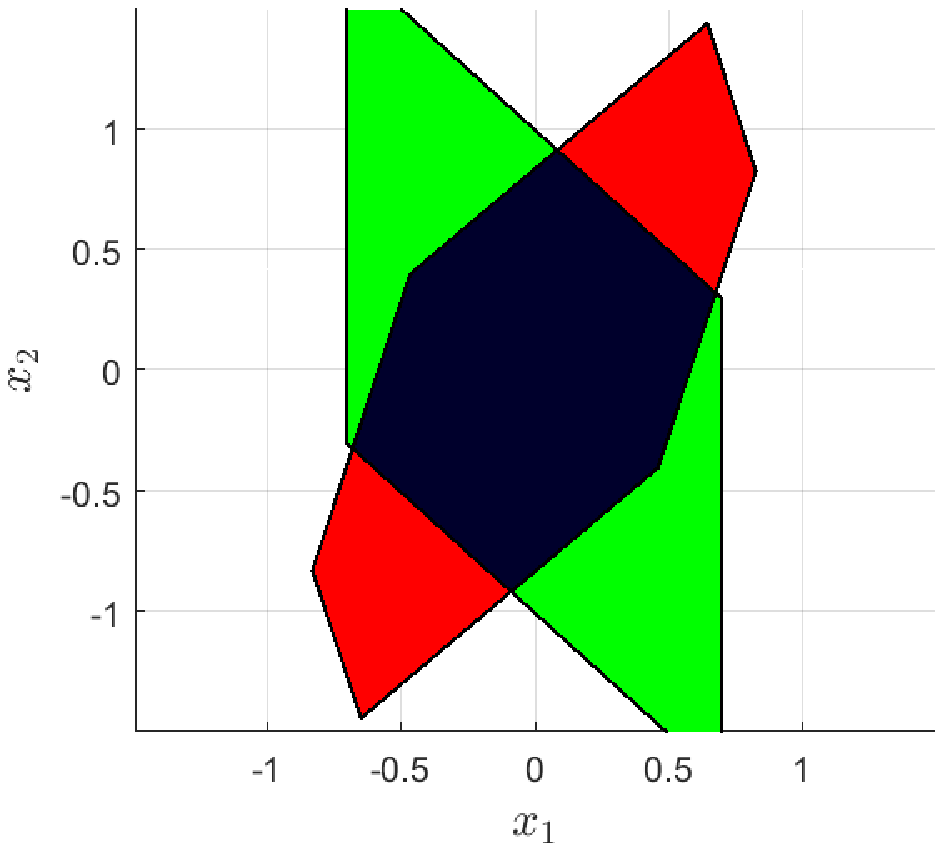}
        \caption{$\Omega_1$}
    \end{subfigure}%
    ~ 
    \begin{subfigure}[t]{0.3\textwidth}
        \centering
        \includegraphics[width=\textwidth]{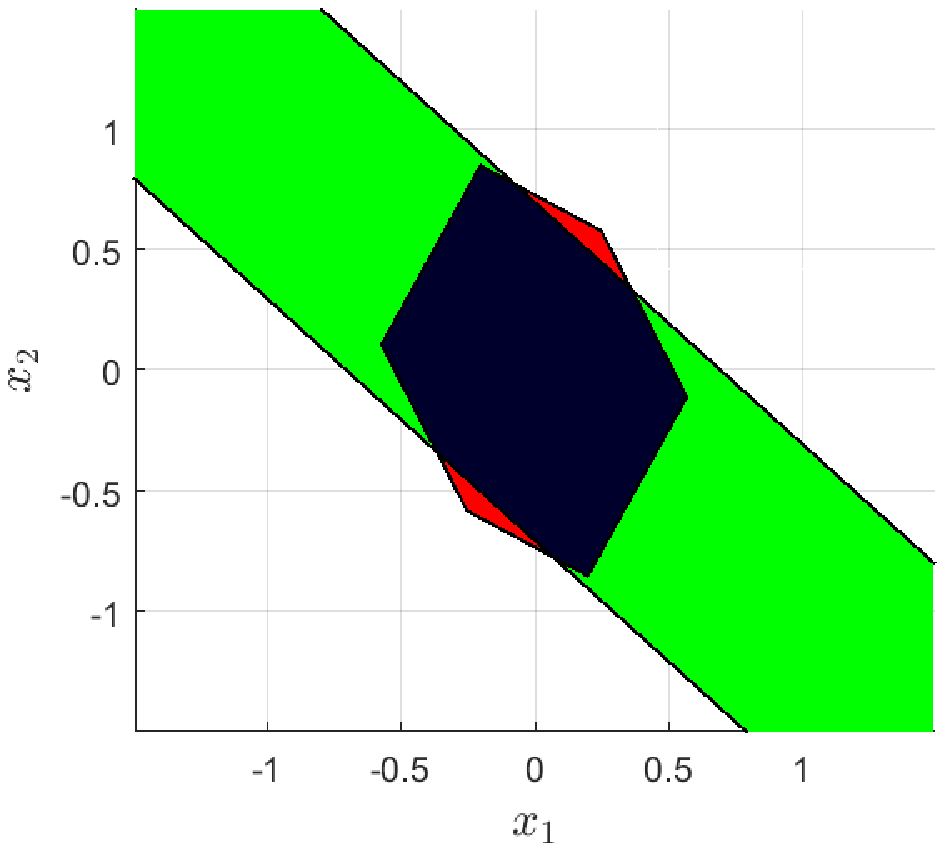}
        \caption{$\Omega_2$}
    \end{subfigure}
    \caption{$\Omega_k$'s for the example in Section III. The left plot shows $\Omega_0$. In the center plot, red shows $\Omega_0$ scaled and rotated via $(A_2A_1)^{-1}$, green shows the new half-spaces introduced by $S_1 C_1x \leq \textbf{1}$ and $S_2 C_2 A_1x \leq \textbf{1}$, black is $\Omega_1$, which is the intersection of red and green. In the right plot, red shows $\Omega_0$ scaled and rotated via matrix $A_2^{-1}$, green shows the new half-spaces introduced by $S_2 C_2x \leq \textbf{1}$, black is $\Omega_2$, which is the intersection of red and green.}
            \label{fig:ex1}
\end{figure*}
It can easily be shown that, similar to $\Omega_k$, $\Omega_k^\epsilon$ is  compact and satisfies $0 \in \mathrm{int}(\Omega_k^\epsilon)$.

According to the above theorems and the form of the inequalities in \eqref{eq:O1}, for asymptotically stable periodic systems (i.e., $d=0$ in \eqref{eq:phiform}), the $\Omega_k$'s can be represented by polytopes of the form 
$$\Omega_k = \{x: H_k x \leq \textbf{1}\},$$ where $H_k$ is a finite-dimensional matrix. If the periodic system is Lyapunov stable with $d\geq 1$, the $\Omega_k$'s may not be finitely determined. In this situation, we use $\Omega_k^\epsilon$'s instead. These sets can be represented by polytopes of the form $$\Omega_k^\epsilon = \{x: H_k x \leq {h}_k\},$$ where $H_k$ and $h_k$ are finite-dimensional matrices. Note from \eqref{eq:tightened}, \eqref{eq:tightened2} that all rows of $h_k$ are equal to 1 except the rows that correspond to the tightened steady state \eqref{eq:tightened}, which are equal to $1-\epsilon$.

\begin{remark} Suppose system \eqref{eq:sys} is Lypaunov stable and $\Omega_k$'s are not finitely determined. To characterize $\Omega_k^\epsilon$, $\epsilon$ cannot be chosen too small as the number of inequalities required to describe $\Omega_k^\epsilon$ approaches infinity as $\epsilon$ tends to zero. On the other hand, $\epsilon$ cannot be chosen too large, because $\Omega_k^\epsilon$ would become a conservative inner approximation of $\Omega_k$, which could lead to loss of performance in control applications. Therefore, a trade-off must be made.
\end{remark}

Now that we have established finite determinism of $\Omega_k$'s, we are ready to investigate their relationship. As we will show, the novel outcome of this investigation is that only one of these sets needs to be computed and stored for real-time applications. We begin with the following theorem, which describes the nested relationship between $\Omega_k$'s.
\begin{theorem}\label{thm:3}
Assume system \eqref{eq:sys} is asymptotically stable (i.e., $d=0$ in \eqref{eq:phiform}) and let $\Omega_k$, $k=1,\ldots, N-1$, be described by $\Omega_k = \{x: H_k x \leq \textbf{1}\}$ for a finite-dimensional matrix $H_k$. Then,  $\Omega_{k-1}$, can be described by $\Omega_{k-1}=\{x: H_{k-1} x \leq \textbf{1}\}$, where
$$
H_{k-1} = \left[ \begin{array}{c} S_{k-1}C_{k-1} \\ H_k (A_{k-1}) \end{array} \right].
$$
For $k=0$, replace $k-1$ by $N-1$ in the above.
\end{theorem}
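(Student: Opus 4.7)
The plan is to derive the claimed recursion directly from the cyclic relationship \eqref{eq:cyclically}, which was established immediately after Definition 1. That relationship states that, for $1\leq k \leq N-1$,
\[
x\in\Omega_{k-1} \Longleftrightarrow C_{k-1}x\in\mathbb{Y}_{k-1}\ \text{ and }\ A_{k-1}x\in\Omega_{k}.
\]
So essentially the whole proof is to translate this set-membership characterization into inequality form using the polytopic representations provided for $\mathbb{Y}_{k-1}$ and $\Omega_k$.

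First, I would note that the hypotheses of the theorem (asymptotic stability plus (A1)--(A4)) together with Theorem \ref{thm:1} ensure that each $\Omega_k$ is finitely determined, so the polytopic description $\Omega_k=\{x: H_k x\leq \mathbf{1}\}$ for a finite matrix $H_k$ is indeed well-posed. Then, using \eqref{eq:constraintPoly}, the condition $C_{k-1}x\in\mathbb{Y}_{k-1}$ becomes $S_{k-1}C_{k-1}x \leq \mathbf{1}$, and the condition $A_{k-1}x\in\Omega_k$ becomes $H_k A_{k-1}x \leq \mathbf{1}$. Stacking these two vector inequalities gives exactly
\[
\Omega_{k-1}=\left\{x:\ \begin{bmatrix} S_{k-1}C_{k-1} \\ H_k A_{k-1}\end{bmatrix} x \leq \mathbf{1}\right\},
\]
which matches the claimed form of $H_{k-1}$.

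For the wrap-around case $k=0$, I would simply apply the second line of \eqref{eq:cyclically}, which has the same structure but with the index shifted by $N$: $x\in\Omega_{N-1} \Leftrightarrow C_{N-1}x\in\mathbb{Y}_{N-1}$ and $A_{N-1}x\in\Omega_0$. The same substitution then yields $H_{N-1}=\bigl[\,(S_{N-1}C_{N-1})^\top\ (H_0 A_{N-1})^\top\,\bigr]^\top$, as required.

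Honestly, there is no serious obstacle here; the statement is essentially an unpacking of \eqref{eq:cyclically}. The only point worth being careful about is that the right-hand side of the polytope description of $\Omega_{k-1}$ is $\mathbf{1}$ (rather than being rescaled): this is fine because both constituent inequalities ($S_{k-1}C_{k-1}x\leq\mathbf{1}$ and $H_kA_{k-1}x\leq\mathbf{1}$) already have right-hand side $\mathbf{1}$, so stacking preserves the normalization. It is also worth noting that the resulting $H_{k-1}$ may contain redundant rows (some half-space constraints from $H_k A_{k-1}$ may be implied by others), but this does not affect the validity of the representation.
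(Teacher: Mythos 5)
Your proof is correct and follows essentially the same route as the paper's: both derive the recursion by unpacking the cyclic relationship \eqref{eq:cyclically} into the stacked inequalities $S_{k-1}C_{k-1}x\leq\mathbf{1}$ and $H_kA_{k-1}x\leq\mathbf{1}$, with the same index shift for the wrap-around case. Your added remarks on well-posedness via Theorem \ref{thm:1} and possible redundancy of rows are accurate but not needed.
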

\begin{proof}
Let $k$ be any index between 1 and $N-1$. By \eqref{eq:cyclically}, we have that $x \in \Omega_{k-1}$ iff $C_{k-1} x \in \mathbb{Y}_{k-1}$ and $A_{k-1} x \in \Omega_k$. Note that $C_{k-1} x \in \mathbb{Y}_{k-1}$ can be expressed as $S_{k-1}C_{k-1}x \leq \textbf{1}$ and $A_{k-1} x \in \Omega_k$ can be expressed as $H_k A_{k-1} x \leq \textbf{1}$. Since both of these inequalities must hold, $\Omega_{k-1}$ can be characterized as stated in the theorem. The case of $k=0$ is proven in the same way except $k-1$ must be replaced by $N-1$.
\end{proof}

Next, we prove the Lyapunov-stable case. For this, we need a lemma:
\begin{lemma}\label{lemma:mat}
For all $\epsilon \in (0,1)$, the sets $\Omega_k^\epsilon$ are related by \par
{\small
\begin{equation}
\begin{aligned}
&x \in \Omega_k^\epsilon \Leftrightarrow C_k x \in \mathbb{Y}_k \,\,\mathrm{and}\,\, A_k x \in \Omega_{k+1}^\epsilon, \,\,\, 0\leq k\leq N-2\\
&x \in \Omega_{N-1}^\epsilon \Leftrightarrow C_{N-1} x \in \mathbb{Y}_{N-1} \,\,\mathrm{and}\,\, A_{N-1} x \in \Omega_{0}^\epsilon.
\end{aligned}
\end{equation}
}
\end{lemma}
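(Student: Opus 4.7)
The plan is to reduce this lemma to the analogous cyclic relation \eqref{eq:cyclically} already established for $\Omega_k$, by writing $\Omega_k^\epsilon = \Omega_{ss}^\epsilon \cap \Omega_k$ and then showing that the steady state piece $\Omega_{ss}^\epsilon$ is itself preserved by the one-step dynamics $x \mapsto A_k x$. Once we have that invariance, both directions of the biconditional collapse into a routine combination of \eqref{eq:cyclically} applied to $\Omega_k$ and the invariance statement applied to $\Omega_{ss}^\epsilon$.

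First I would make Lemma 1 operational by introducing notation $\Gamma^{(k)}$ for the ``steady-state output'' matrix obtained by lifting \eqref{eq:sys} at timeslot $k$, so that for a state $x$ sampled in timeslot $k$ the limit $\lim_{t\to\infty}\widetilde y(t) = \Gamma^{(k)} x$. Lemma 1 then says that $\Omega_{ss}^\epsilon = \{x : S\Gamma^{(k)} x \leq (1-\epsilon)\mathbf{1}\}$ is the same set for every $k$. The key algebraic identity is $\Gamma^{(k+1)} A_k = \Gamma^{(k)}$: starting from $x$ at timeslot $k$ and from $A_k x$ at timeslot $k+1$ produces the same $N$-periodic output sequence at steady state (just reshuffled, by the same permutation that underlies Lemma 1). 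This immediately yields the desired invariance: if $x \in \Omega_{ss}^\epsilon$ viewed at timeslot $k$, then
\begin{equation*}
S\Gamma^{(k+1)} A_k x = S\Gamma^{(k)} x \leq (1-\epsilon)\mathbf{1},
\end{equation*}
so $A_k x \in \Omega_{ss}^\epsilon$ viewed at timeslot $k+1$, which is the same set by Lemma 1.

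With this in hand, I would prove the forward direction of the lemma as follows. Fix $0 \leq k \leq N-2$ and suppose $x \in \Omega_k^\epsilon = \Omega_{ss}^\epsilon \cap \Omega_k$. Since $x \in \Omega_k$, relation \eqref{eq:cyclically} gives $C_k x \in \mathbb{Y}_k$ and $A_k x \in \Omega_{k+1}$; since $x \in \Omega_{ss}^\epsilon$, the invariance argument above gives $A_k x \in \Omega_{ss}^\epsilon$. Intersecting, $A_k x \in \Omega_{k+1}^\epsilon$. For the converse, suppose $C_k x \in \mathbb{Y}_k$ and $A_k x \in \Omega_{k+1}^\epsilon$. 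Then $A_k x \in \Omega_{k+1}$ so $x \in \Omega_k$ by \eqref{eq:cyclically}; and $A_k x \in \Omega_{ss}^\epsilon$, which combined with $\Gamma^{(k+1)} A_k = \Gamma^{(k)}$ and Lemma 1 gives $S\Gamma^{(k)} x \leq (1-\epsilon)\mathbf{1}$, i.e.\ $x \in \Omega_{ss}^\epsilon$. The wrap-around case $k = N-1$ is identical with $A_{N-1}$ carrying $x$ back to timeslot $0$.

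The main obstacle is justifying the identity $\Gamma^{(k+1)} A_k = \Gamma^{(k)}$ cleanly. A direct computation from \eqref{eq:gamma} is doable but notation-heavy because $\Phi$, $\Phi_s$, $\Phi_c$, $C_s$, $C_c$ are all defined by lifting at timeslot $0$; the cleanest route is to appeal to the trajectory-level description used in the proof of Lemma 1, namely that the steady-state output of \eqref{eq:sys} is an $N$-periodic sequence shared by both initial conditions, and to deduce the matrix identity by evaluating both sides against the state at any one timeslot. Once this is phrased correctly, the remainder of the proof is a two-line bookkeeping exercise against \eqref{eq:cyclically}.
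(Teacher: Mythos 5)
Your overall structure matches the paper's proof: write $\Omega_k^\epsilon = \Omega_{ss}^\epsilon \cap \Omega_k$, let the cyclic relation \eqref{eq:cyclically} handle the $\Omega_k$ part, and reduce both directions of the biconditional to showing that $\Omega_{ss}^\epsilon$ is invariant under $x \mapsto A_k x$. Where you diverge is in how that invariance is justified, and there you have made the step harder than it is. The paper's observation is that $\Gamma A_k = \Gamma$ for every $k$ as an \emph{exact} identity involving only the single timeslot-$0$ matrix $\Gamma$ of \eqref{eq:gamma}: the leading block of $\Gamma$ is $\mathbf{0}$ and $A_k$ in \eqref{eq:formofAk} is block upper triangular with $I_d$ in the lower-right corner, so $[\,\mathbf{0}\;\; M\,]\bigl[\begin{smallmatrix} A_{k,s} & A_{k,c} \\ \mathbf{0} & I_d\end{smallmatrix}\bigr] = [\,\mathbf{0}\;\; M\,]$. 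Hence $S\Gamma x = S\Gamma A_k x$ and $x\in\Omega_{ss}^\epsilon \Leftrightarrow A_k x\in\Omega_{ss}^\epsilon$ with no permutation bookkeeping at all; the ``direct computation from \eqref{eq:gamma}'' you dismissed as notation-heavy is a one-liner. Your alternative via timeslot-dependent matrices $\Gamma^{(k)}$ is salvageable but not correct as written: since the steady-state output window seen from timeslot $k+1$ is the cyclic shift of the one seen from timeslot $k$, the true identity is $\Gamma^{(k+1)}A_k = P\,\Gamma^{(k)}$ for a block-cyclic permutation $P$, not $\Gamma^{(k+1)}A_k=\Gamma^{(k)}$, and the constraint matrix must simultaneously be permuted from $\mathrm{blkdiag}(S_0,\ldots,S_{N-1})$ to $\mathrm{blkdiag}(S_{k+1},\ldots,S_{N-1},S_0,\ldots,S_k)$ so that rows stay matched. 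The resulting inequality systems still describe the same set, since permuting the rows of a polyhedral description changes nothing (which is exactly the content of Lemma 1), so your conclusion survives, but you end up carrying precisely the notational load you were trying to avoid. The remainder of your argument, including the wrap-around case $k=N-1$, is fine.
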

\begin{proof}
Due to the position of \textbf{0}'s and $I_d$ in $\Gamma$ and $A_k$, we have that $\Gamma A_k = \Gamma$, $\forall k$. Let $k$ be any index from 0 to $N-2$. From \eqref{eq:tightened}, \eqref{eq:tightened2}, we have that $x \in \Omega_k^\epsilon$ iff $S\Gamma x \leq (1-\epsilon)\textbf{1}$, $S_kC_kx\leq \textbf{1}$, $S_{k+1}C_{k+1}A_k x\leq \textbf{1}$, etc. By the first part of the proof, we can write $S\Gamma x \leq (1-\epsilon)\textbf{1}$ as $S\Gamma A_k x \leq (1-\epsilon)\textbf{1}$. Therefore, $x \in \Omega_k^\epsilon$ iff $S_kC_kx\leq \textbf{1}$ (i.e., $C_k x \in \mathbb{Y}_k$) and $A_k x \in \Omega_{k+1}^\epsilon$. The case of $k=N-1$ can be proven similarly.
\end{proof}

\begin{theorem}
Assume system \eqref{eq:sys} is Lyapunov stable with $d\geq 1$ and let $\Omega_k^\epsilon$ in \eqref{eq:tightened2} be described by $\Omega_k^\epsilon = \{x: H_k x \leq h_k\}$ for finite-dimensional matrices $H_k$ and $h_k$. Then,  $\Omega_{k-1}^\epsilon$ can be described by $\Omega_{k-1}^\epsilon=\{x: H_{k-1} x \leq h_{k-1}\}$, where
$$
H_{k-1} = \left[ \begin{array}{c} S_{k-1}C_{k-1} \\ H_k (A_{k-1}) \end{array} \right], h_{k-1} = \left[ \begin{array}{c} \textbf{1} \\ h_{k} \end{array} \right],
$$
For $k=0$, replace $k-1$ by $N-1$ in the above.
\end{theorem}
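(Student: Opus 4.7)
The plan is to mirror the argument of Theorem \ref{thm:3} almost verbatim, with the key substitution that the cyclic relation \eqref{eq:cyclically} is replaced by the tightened cyclic relation established in Lemma \ref{lemma:mat}. The extra work compared to the asymptotically stable case amounts only to bookkeeping the right-hand sides $h_k$ correctly.

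First, I would fix an index $k$ with $1 \leq k \leq N-1$ and apply Lemma \ref{lemma:mat} to conclude that $x \in \Omega_{k-1}^\epsilon$ if and only if both $C_{k-1} x \in \mathbb{Y}_{k-1}$ and $A_{k-1} x \in \Omega_k^\epsilon$. Then I would translate each condition into a matrix inequality: using assumption (A4), the first condition is equivalent to $S_{k-1} C_{k-1} x \leq \mathbf{1}$; using the assumed representation of $\Omega_k^\epsilon$, the second condition is equivalent to $H_k A_{k-1} x \leq h_k$. Stacking these two inequalities gives exactly
$$
\begin{bmatrix} S_{k-1} C_{k-1} \\ H_k A_{k-1} \end{bmatrix} x \leq \begin{bmatrix} \mathbf{1} \\ h_k \end{bmatrix},
$$
which is the claimed description of $\Omega_{k-1}^\epsilon$.

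For the boundary case $k=0$, the second half of Lemma \ref{lemma:mat} states that $x \in \Omega_{N-1}^\epsilon$ iff $C_{N-1} x \in \mathbb{Y}_{N-1}$ and $A_{N-1} x \in \Omega_0^\epsilon$, so the identical argument with the index $k-1$ replaced by $N-1$ closes the cycle. I don't foresee a substantive obstacle: Lemma \ref{lemma:mat} has already done the nontrivial work of showing that the tightened steady-state constraint propagates correctly through the cyclic identity $\Gamma A_k = \Gamma$, so this theorem is essentially a direct rewriting of that lemma in terms of the polytope data $(H_k, h_k)$. The only point worth emphasizing is that the rows of $h_k$ corresponding to the tightened steady-state constraint $S \Gamma x \leq (1-\epsilon)\mathbf{1}$ remain unchanged under the recursion, since $\Gamma A_{k-1} = \Gamma$ means they are simply inherited from $H_k A_{k-1}$ without needing to be added anew to $S_{k-1} C_{k-1}$.
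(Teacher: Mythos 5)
Your proposal is correct and follows essentially the same route as the paper's own proof: invoke Lemma~\ref{lemma:mat} for the tightened cyclic relation, translate the two membership conditions into the stacked inequalities $S_{k-1}C_{k-1}x \leq \mathbf{1}$ and $H_k A_{k-1} x \leq h_k$, and handle $k=0$ by substituting $N-1$ for $k-1$. Your closing observation that the tightened steady-state rows are inherited through $\Gamma A_{k-1} = \Gamma$ is a nice clarification but is already subsumed by the lemma.
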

\begin{proof}
Similar to the proof of Theorem \ref{thm:3}, let $k$ be any index between 1 and $N-1$. By Lemma \ref{lemma:mat}, we have that $x \in \Omega_{k-1}^\epsilon$ iff $C_{k-1} x \in \mathbb{Y}_{k-1}$ and $A_{k-1} x \in \Omega_k^\epsilon$. Note that $C_{k-1} x \in \mathbb{Y}_{k-1}$ can be expressed as $S_{k-1}C_{k-1} x\leq \textbf{1}$ and $A_{k-1} x \in \Omega_k^\epsilon$ can be expressed as $H_k A_{k-1} x \leq h_k$. Since both of these inequalities must hold, $\Omega_{k-1}^\epsilon$ can be characterized as stated in the theorem. The case of $k=0$ is proven in the same way except $k-1$ must be replaced by $N-1$ in the above.
%
%
\end{proof}

According to the above theorems, $H_{k-1}$ and $h_{k-1}$ can be expressed explicitly as a function of $H_k$ and $h_k$. The following corollary uses the above ideas to relate $H_{k}$'s and $h_{k}$'s to $H_0$ and $h_0$. 

\begin{corollary}
Let $\Omega_0$ (or $\Omega_0^\epsilon$ if the system is not asymptotically stable) be described by $\{x: H_0 x \leq h_0\}$ for finite-dimensional matrices $H_0$ and $h_0$. Then, $\Omega_k$ (or $\Omega_k^\epsilon$), $k=1,\ldots, N-1$, can be described by $\{x: H_k x \leq h_k \}$, where \par
{\small $$
H_{N-1} = \left[ \begin{array}{c} S_{N-1}C_{N-1} \\ H_0 (A_{N-1}) \end{array} \right], h_{N-1} =  \left[ \begin{array}{c} \textbf{1} \\ h_0 \end{array} \right], 
$$
$$
H_{N-2} = \left[ \begin{array}{c} S_{N-2}C_{N-2} \\ S_{N-1}C_{N-1}A_{N-2} \\ H_0 (A_{N-1}A_{N-2}) \end{array} \right], h_{N-2} =  \left[ \begin{array}{c} \textbf{1} \\ \textbf{1} \\ h_0 \end{array} \right], 
$$
$$
 \ldots\,\,\, H_{1} = \left[ \begin{array}{c} S_{1}C_{1} \\ S_{2}C_{2}A_{1} \\ \vdots \\ S_{N-1} C_{N-1} A_{N-2}\cdots A_{1} \\ H_0 (A_{N-1}\cdots A_2 A_1) \end{array} \right], h_{1} =  \left[ \begin{array}{c} \textbf{1} \\ \textbf{1} \\ \vdots \\ \textbf{1} \\  h_0 \end{array} \right].
$$}
\end{corollary}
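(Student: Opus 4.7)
The plan is to prove the corollary by backward induction on the timeslot index $k$, descending from $k = N-1$ down to $k = 1$, invoking the one-step recursion established in the preceding two theorems (Theorem 3 in the asymptotically stable case, and Theorem 4 in the Lyapunov-stable case). Because the recursion has an identical algebraic form in both cases, and because the asymptotically stable case corresponds to $h_0 = \textbf{1}$, a single unified induction covers both situations.

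For the base step, I apply the recursion with $k = 0$, noting that the theorem tells us to replace $k-1$ by $N-1$. This immediately produces $\Omega_{N-1}$ (respectively $\Omega_{N-1}^\epsilon$) as $\{x : H_{N-1} x \leq h_{N-1}\}$ with $H_{N-1} = [\,S_{N-1}C_{N-1}\,;\, H_0 A_{N-1}\,]$ and $h_{N-1} = [\,\textbf{1}\,;\, h_0\,]$, which is precisely the expression claimed by the corollary. For the inductive step, assume that $H_k$ and $h_k$ have the stacked form stated, with top rows $S_k C_k,\, S_{k+1}C_{k+1}A_k,\, \ldots,\, S_{N-1}C_{N-1}A_{N-2}\cdots A_k$ stacked above the bottom block $H_0(A_{N-1}\cdots A_k)$, and the corresponding $h_k$ consisting of $\textbf{1}$'s above $h_0$. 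Applying the recursion once more yields $H_{k-1} = [\,S_{k-1}C_{k-1}\,;\, H_k A_{k-1}\,]$ and $h_{k-1} = [\,\textbf{1}\,;\, h_k\,]$. Right-multiplying each row of $H_k$ by $A_{k-1}$ appends one additional $A_{k-1}$ factor on the right of every product, producing exactly the stacked expression stated for $H_{k-1}$ and $h_{k-1}$, and completing the induction.

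There is no substantive obstacle in this proof; the corollary is essentially a telescoped form of the one-step recursion proved earlier. The only care required is in the ordering of the matrix products $A_{N-1}A_{N-2}\cdots A_k$, which must be consistent with the left-multiplication convention for $\prod$ introduced in the definition of $\Omega_k^j$ (the most recent transition appears on the left), and in the modular index convention that $k-1$ is interpreted as $N-1$ when $k = 0$. Both are minor bookkeeping points, not genuine difficulties.
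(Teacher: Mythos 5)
Your proof is correct and matches the paper's intent: the paper gives no explicit proof for this corollary, presenting it as the telescoped consequence of the one-step recursions in Theorems 3 and 4, which is exactly the backward induction you carry out (base step from the $k=0$ case of the recursion yielding $H_{N-1}$ from $H_0$, then repeated substitution). Your bookkeeping on the ordering of the $A_k$ products and the unified treatment of both cases via $h_0=\textbf{1}$ is accurate.
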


This corollary has three important implications for $\Omega_k$'s (these implications are also valid for $\Omega_k^\epsilon$'s). First, if $\Omega_k$ is finitely determined for some $k$, then all other $\Omega_i$, $i\neq k$, are also finitely determined. In addition, the admissibility index of $\Omega_i$ is at most $j^*+N-1$, where $j^*$ is the admissibility index of $\Omega_k$ and $N$ is the period.

Second, not all $H_k$'s and $h_k$'s need to be stored in memory for real-time control. Instead, only $H_0$ and $h_0$ must be stored, together with the matrices that arise in the corollary. For example, to store $H_{N-1}$, in addition to $H_0$ and $h_0$, we need to store $A_{N-1}$ and the result of $S_{N-1}C_{N-1}$. Similarly, to store $H_{N-2}$, we need to store the results of $S_{N-2}C_{N-2}$,  $S_{N-1}C_{N-1} A_{N-2}$, and $A_{N-1}A_{N-2}$. This may significantly reduce the memory burden in an embedded setting, at the expense of an increase in processing burden (due to real-time matrix multiplication and addition). We analyze this trade-off in Section IV.

Third, it can be seen that the sets $\Omega_1,\ldots,\Omega_{N-1}$ are geometrically related to $\Omega_0$ via simple transformations: to obtain $\Omega_k$, the set $\Omega_0$ is first scaled and rotated. Then, new half spaces are introduced and, finally, the new half spaces are intersected with the scaled and rotated $\Omega_0$. Note that the scaling and rotation are determined by the inverse of the product of $A_k$'s that multiply $H_0$ in the last rows of $H_{k}$'s in Corollary 1. If the inverse does not exist, the resulting transformation will be unbounded in one or more directions. These ideas are illustrated in Fig. \ref{fig:ex1} for the following example\footnote{Plots are generated by MPT 3.0 Toolbox in Matlab \cite{M.HercegM.KvasnicaC.Jones2013}.}: $N=3$ and
\begin{equation}\label{eq:example}
\begin{aligned}
& A_0 = \left[ \begin{array}{cc} 1 & 0 \\ 0 & 2 \end{array} \right],  A_1 = \left[ \begin{array}{cc} 0.8 & -0.5	 \\ 0.2 & 0.5 \end{array} \right], S_{0,1} = \left[ \begin{array}{c} 1  \\ -1 \end{array} \right], \\
& A_2 = \left[ \begin{array}{cc} 0 & -0.8 \\ 0.8 & 0 \end{array} \right], S_2=  \left[ \begin{array}{c} \frac{10}{7} \\ -\frac{10}{7}\end{array} \right], C_{0,1,2}=[1,1]^\top.
\end{aligned}
\end{equation}

Finally, below we present an algorithm for computing $\Omega_0$ (or $\Omega_0^\epsilon$ for Lyapunov-stable systems). The sets $\Omega_1, \ldots, \Omega_{N-1}$ can be computed using Corollary 1.

\noindent \textbf{Algorithm 1.}

\textbf{Inputs:} $A_k$, $C_k$, $S_k$, $\epsilon$.

\textbf{Outputs:} $H_0$ and $h_0$ such that $\Omega_0 = \{x:H_0 x\leq h_0\}$.

\begin{enumerate}
\item Initialization: Set $P = I$, $t=0$, $c=0$. If $d \geq 1$ in \eqref{eq:phiform}, compute $\Phi$, $C$, $S$, $\Gamma$ from \eqref{eq:monodromy}, \eqref{eq:C}, \eqref{eq:S}, \eqref{eq:gamma} and set 
$$
H_0 = S \Gamma, \, h_0 = (1-\epsilon) \textbf{1},
$$
otherwise, set $H_0 = [ \,]$ and $h_0 =   [\, ]$. 

\item Set $Y = S_{mod(t,N)} C_{mod(t,N)} P$.
\item For each row of $Y$, $y_i$, solve the linear program
$$
M = \max_{z} y_i z \quad\,\,\, \mathrm{subject\, to}\,\,\, H_0z\leq h_0.
$$
If $M > 1$ or the problem is unbounded, $y_i$ is not redundant, so augment $y_i$ and ${1}$ to $H_0$ and $h_0$: 
$$
H_0 = \left[ \begin{array}{c} H_0 \\ y_i \end{array}\right], \, h_0 = \left[ \begin{array}{c} h_0 \\ {1} \end{array}\right].
$$
\item If any $y_i$ is not redundant, set $c=0$.
\item If all $y_i$ are redundant, set $c = c + 1$. If $c=N$, terminate the algorithm. 
\item Set $P = A_{mod(t,N)} P$, $t = t + 1$. Go to step 2.
\end{enumerate}

\section{Reference Governors for Linear Periodic Systems}

In this section, we use the results of Section III to introduce two reference governor (RG) formulations for linear periodic systems. 

Consider the system shown in Fig. \ref{fig:RGblock}, whose closed loop plant is a single-input multi-output $N$-periodic system:
\begin{equation}\label{eq:sysIn}
\begin{aligned}
{x}(t+1) &= \bar{A}_k {x}(t) + \bar{B}_k v(t)\\
y(t) &=\bar{C}_k {x}(t) + \bar{D}_k v(t)
\end{aligned}
\end{equation}
where $k = mod(t,N)$. It is assumed that the closed loop plant contains a stabilizing tracking controller. Hence, the above periodic system is asymptotically stable. The output, $y$, is required to satisfy the constraint $y(t) \in \mathbb{Y}_{mod(t,N)}$, similar to \eqref{eq:constraintPoly}. The role of RG is to select $v(t)$, at every $t$, to ensure that: (i) $y(t) \in \mathbb{Y}_{mod(t,N)}$ for all $t$, and (ii) $v(t)$ is as close to $r(t)$ as possible to minimize loss of tracking performance. Note that the second equation in \eqref{eq:sysIn} assumes that $y(t)$ depends on the selected $v(t)$, which is only possible in a sampled data system if the selected $v(t)$ is applied to the system without any delays or computational overhead. We allow this for simplicity.

In the LTI case \cite{Gilbert1991a}, RG uses the MAS of an augmented system defined by the original dynamics augmented with constant input dynamics $v(t+1)=v(t)$. In real time, RG computes $v(t)$ to be $r(t)$ passed through a low pass filter with variable time constant. Specifically, $v(t) = v(t-1) + \kappa (r(t)-v(t-1))$, where $\kappa \in [0,1]$ is the time constant and is computed by solving the following linear program: $\max_{\kappa \in [0,1]} \kappa$ subject to $(x(t),v(t))$ belonging to the MAS of the augmented system, where $x$ is the measured (or observed) state and $v(t)$ is given by the above filter equation. If $v(t)=r(t)$ is a feasible input, then $\kappa = 1$ and $v(t)=r(t)$. If significant constraint violation is predicted, then $\kappa = 0$ and $v(t)=v(t-1)$. Note that if $v(-1)$ is such that $(x(0), v(-1))$ belongs to MAS, then $v(t)=v(t-1)$  guarantees constraint satisfaction for all times (as it is included in the augmented dynamics), implying recursive feasibility of RG.

For periodic systems, RG can be developed in a similar manner. As we show, the main difference with the LTI case is that, at every timestep, RG must employ the MAS at the current timeslot, i.e., $\Omega_{mod(t,N)}^\epsilon$. Furthermore, the periodicity of the system allows for richer formulations of RG. Below, we present two formulations, discuss their stability and recursive feasibility, present algorithms for their computation, and highlight their differences. 

\subsection{Formulation 1 -- Fixed Input}\label{sec:1}

The first formulation, similar to RG for LTI systems, assumes constant input dynamics: $v(t+1)=v(t)$. Defining ${x}_{aug} = [x^\top, v]^\top$, the augmented system has the form \eqref{eq:sys}, \eqref{eq:formofAk}, where
\begin{equation}\label{eq:ACexample}
A_k = \left[ \begin{array}{cc} \bar{A}_k & \bar{B}_k \\ \textbf{0} & 1 \end{array} \right], \,\, C_k = [\bar{C}_k\,\,\,\, \bar{D}_k].
\end{equation}
The above augmented system is Lyapunov stable. Thus, we compute $\Omega_k^\epsilon$ using Algorithm 1 and Corollary 1, for some $0 <\epsilon \ll 1$. Now, the reference governor can be implemented in real time by either updating $v(t)$ at the beginning of the period, i.e., at $t=iN$, $i\in \mathbb{Z}^+$, or updating $v(t)$ at every $t$. The former implementation is  simpler and only requires $\Omega_0^\epsilon$, but it has the downside that changes to $v(t)$ will only occur at timeslot 0, potentially causing a delay in the system that could be as large as $N-1$ samples. Therefore, we only explore the latter implementation: at every time $t$,  RG computes $v(t)$ as
$$
v(t) = v(t-1) + \kappa (r(t)-v(t-1)),
$$
where $\kappa$ is computed by solving the linear program
\begin{equation}\label{eq:RG1}
\begin{aligned}
&\underset{\kappa}{\text{maximize}} & & \kappa\\
& \text{subject to} & & \kappa \in [0,1] \\ &&& (x_0,v_0) \in \Omega_{mod(t,N)}^\epsilon \\ &&& v_0 = v(t-1) + \kappa (r(t)-v(t-1)) \\ &&& x_0 = {x}(t)
\end{aligned}
\end{equation}
Note that ${x}(t)$ (measured or estimated), $v(t-1)$, and $r(t)$ are known parameters in the above linear program.

{\it Feasibility and Stability of Formulation 1:} Since $\Omega_k^\epsilon$'s are computed with $v(t+1)=v(t)$ as part of the augmented dynamics, it follows that $\kappa = 0$ (i.e., $v(t)=v(t-1)$ or equivalently $v(t+1)=v(t)$) is always feasible, implying that this RG formulation is recursively feasible. Moreover, since $v(t)$ is a low pass filtered version of $r(t)$, the overall system with RG is stable.

{\it Numerical solution:} We now present an efficient algorithm for solving \eqref{eq:RG1}. Note that the constraints in \eqref{eq:RG1} can be equivalently written as $({x}(t), v(t-1) + \kappa (r(t)-v(t-1))) \in \Omega_{mod(t,N)}^\epsilon$. To simplify notation, let $\tau = mod (t,N)$ denote the current timeslot. Representing $\Omega_\tau^\epsilon$ by $\Omega_\tau^\epsilon = \{x:H_\tau x \leq h_\tau\}$, and partitioning $H_\tau$  as $H_\tau = [H_{\tau x},\,\, H_{\tau v}]$, the constraints in \eqref{eq:RG1} become:
\begin{equation}\label{eq:RGsimple}
\kappa (r(t)-v(t-1)) H_{\tau v} \leq h_\tau - H_{\tau x}{x}(t) - H_{\tau v}v(t-1).
\end{equation}
The $i$-th row of the above is in the form $\kappa a_i \leq b_i$. Note that, because of recursive feasibility, we have that $(x(t),v(t-1)) \in \Omega_\tau$, i.e., $H_{\tau x}{x}(t) + H_{\tau v}v(t-1) \leq h_\tau$. This implies that $b_i \geq 0, \forall i$. Therefore, \eqref{eq:RG1} can be solved by iteratively looping through all the  inequalities in \eqref{eq:RGsimple} and finding the maximum $\kappa \in [0,1]$ that satisfies all the inequalities. This leads to the following algorithm for solving \eqref{eq:RG1}:

\noindent \textbf{Algorithm 2.}

\textbf{Inputs:} $H_{\tau x}$, $H_{\tau v}$, $h_\tau$, $r(t)$, $v(t-1)$, $x(t)$.

\textbf{Output:} $\kappa$.

\begin{enumerate}
\item Set $i = 1$. Set $\kappa = 1$.
\item Compute the $i$-th row of \eqref{eq:RGsimple}. Let the left and right hand sides of the inequality be denoted by $a_i$ and $b_i$.
\item Compute $\gamma = \frac{b_i}{a_i}$. 
\item If $0 \leq \gamma < \kappa$, set $\kappa = \gamma$.  
\item Set $i=i+1$. If $i$ is larger than the number of rows of $H_{\tau x}$, terminate. Otherwise, go to step 2.
\end{enumerate}

Note that the above algorithm terminates in finite, known time. In fact, the number of iterations is equal to the number of rows of $H_\tau$, which is known {\it a priori}. With this simple algorithm, there is no need for an advanced solver such as interior point or simplex. 

{\it Space-time trade-offs:} According to Corollary 1, instead of storing all $H_k$'s and $h_k$'s in memory, it suffices to only store $H_0$, $h_0$, and few other matrices that arise in the corollary. This reduces the memory burden of the RG, at the expense of an increase in processing requirements. We now make these claims concrete. The notations and assumptions in this analysis are as follows. We let $N$, as before, denote the system period and let $n$ and $p$ denote the dimensions of $x$ and $y$ in \eqref{eq:sysIn} respectively. We assume that $H_0$ is computed using Algorithm 1 and $H_1, \ldots, H_{N-1}$ are computed using Corollary 1 without any additional inspection of redundancy. We denote the number of rows of $H_0$ by $m$, which, together with \eqref{eq:ACexample}, implies that $H_0$ is an $m \times (n+1)$ matrix. We further assume that the control loop runs on a single processor without parallelization. For simplicity, we assume that $S_k$ is $q\times p$, i.e., $q_k=q, \forall k$ in \eqref{eq:constraintPoly}. Finally, we refer to the approach of storing all $H_k$'s and $h_k$'s in memory as ``complete storage'' and that of storing only $H_0$ and $h_0$ and the matrices that arise in Corollary 1 as ``partial storage''.

Recall from Section III that all rows of $h_k$'s are equal to 1 except the rows that correspond to the tightened steady state \eqref{eq:tightened}, which are equal to $1-\epsilon$. Therefore, $h_k$'s do not need to be stored in memory. Instead, it suffices to store the value of $\epsilon$ and the row numbers in which $1-\epsilon$ appears. For this reason, we do not consider $h_k$'s in this analysis. 

Using the matrices in Corollary 1, under complete storage, a total of $Nm(n+1) + \frac{N (N-1)}{2}q(n+1)$ floating point numbers are required to represent all $H_k$'s. Under partial storage, however, it can be shown that $m(n+1) + \frac{N(N-1)}{2}q(n+1)+(N-1)(n)(n+1)$ floating points are needed. Subtracting the latter expression from the former yields the memory savings due to partial storage: 
$$
\text{Floating point numbers saved} = (N-1)(n+1)(m-n).
$$
In most processors, floating points are represented by 32 bits (4 byte). Therefore,
\begin{equation}\label{eq:memorysavings}
\text{Total memory saved} = 4(N-1)(n+1)(m-n) \text{ bytes}.
\end{equation}
Clearly, if $m > n$, memory savings are positive. Therefore, partial storage should only be used for systems in which $m > n$ (which is typically the case). Furthermore, the gains are linear in $N$ and $m$, implying that partial storage yields significant gains for large $N$ and $m$.

The above memory savings come at the expense of additional processing burden. To investigate, we begin with an example scenario: assume the current timeslot is $\tau=N-1$. Using Corollary 1 and the structure of $A_k$ in \eqref{eq:ACexample}, we can express $H_{\tau x}$, $H_{\tau v}$, $h_\tau$ in \eqref{eq:RGsimple} in terms of $H_{0x}$, $H_{0v}$, $h_0$ as:\par
{\small $$
H_{\tau x} = \left[ \begin{array}{c} S_{\tau }\bar{C}_{\tau } \\ H_{0x} \bar{A}_{\tau } \end{array} \right], H_{\tau v} = \left[ \begin{array}{c} S_{\tau } \bar{D}_{\tau }  \\ H_{0x}\bar{B}_{\tau }+H_{0v} \end{array} \right], h_{\tau } = \left[ \begin{array}{c} \textbf{1} \\ h_0 \end{array} \right].
$$}%
The first row of the above must be stored regardless of partial or complete storage. Therefore, the disadvantage of partial storage is that the matrices $H_{0x}\bar{A}_\tau $ and $H_{0x}\bar{B}_{\tau }+H_{0v}$ must be computed in real time. However, note that $H_{\tau x}$ is used in \eqref{eq:RGsimple} as a product $H_{\tau x}{x}$. This implies that  $H_{0x}\bar{A}_\tau$ appears in \eqref{eq:RGsimple} as the product $H_{0x}\bar{A}_\tau {x}$. Therefore, instead of computing $H_{0x}\bar{A}_\tau $, the product $\bar{A}_\tau  {x}$ can first be computed  (this requires $n^2$ multiplications and $n(n-1)$ additions) and the result can be used in the calculations in \eqref{eq:RGsimple}, which requires the same number of arithmetic operations regardless of partial or complete storage. The computation of $H_{0x}\bar{B}_{\tau }+H_{0v}$ requires $mn$ multiplications and $mn$ additions. It can be shown that the numbers found above are the same for all timeslots $\tau=0,\ldots,N-1$. Therefore, as compared with complete storage, partial storage increases the total number of multiplications and additions by
\begin{equation}\label{eq:computationalincrease}
\text{Additional arithmetic operations: } n(2m+2n-1)
\end{equation}
in each timestep.

\begin{figure}
     \centering
     \includegraphics[scale=0.55]{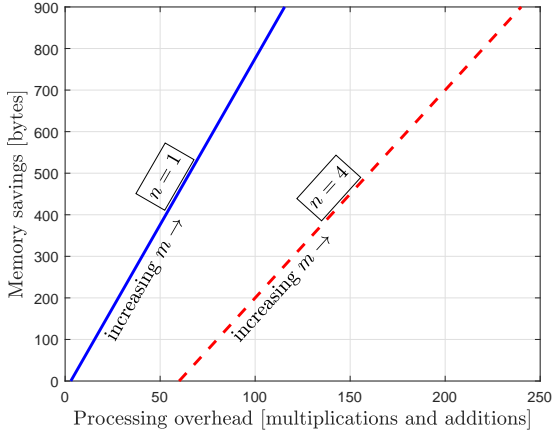}
     \caption{Space-time trade-off for the partial storage approach. The above lines are generated for $N=3$. For the solid blue curve, $n=1$ and $m$ is varied from 1 to 58. For the dashed red curve, $n=4$ and $m$ is varied from 4 to 28. The plot shows that memory savings come at the expense of an increase in processing overhead. Moreover, the relationship is linear in $m$ and has a higher slope for smaller $n$'s. This indicates that the greatest benefits of  partial storage are realized for large $m$ and small $n$.}
     \label{fig:computations}
\end{figure}

To illustrate the above, Fig. \ref{fig:computations} shows the memory savings, \eqref{eq:memorysavings}, versus the additional processing burden, \eqref{eq:computationalincrease}, due to the partial storage approach. The plots are generated for $N=3$, two $n$'s, and several values of $m$. Clearly, the savings in memory are associated with an increase in processing burden. Furthermore, as seen by the slope of the lines, smaller $n$'s lead to the greatest savings. Using this figure and equations \eqref{eq:memorysavings},  \eqref{eq:computationalincrease}, an engineering decision can be made as to whether or not the partial storage approach would be beneficial for the application at hand.

{\it Numerical Example:} The performance of this RG formulation is illustrated on a numerical example obtained using Matlab. The matrices $\bar{A}_k$, $\bar{C}_k$ are the same as  ${A}_k$, ${C}_k$ in \eqref{eq:example}, ${S}_k$ are the same as those in \eqref{eq:example}, and $N=3$, $B_0=(-2,1), B_1 = (1,0), B_2 = (8,-1)$, $D_{0,1,2} = 0$, $\epsilon=0.05$. For the purpose of illustration, it is assumed that all states are measured. The response for a pulse input $r(t)$ is shown in Fig. \ref{fig:ex2_1}. As can be seen, the RG successfully shapes the reference signal to prevent constraint violation. Note that, for this simple example, $m = 22$, so Algorithm 2 converges in 22 iterations. Furthermore, under complete storage, 846 byte of memory are required to store $\Omega_k^\epsilon$'s, whereas under partial storage, only 384 byte are required. This is a 54\% improvement, which comes at the cost of an increase in the number of real-time multiplications and additions by 94.

\begin{figure*}
    \centering
    \begin{subfigure}[t]{0.44\textwidth}
        \centering
        \includegraphics[width=\textwidth]{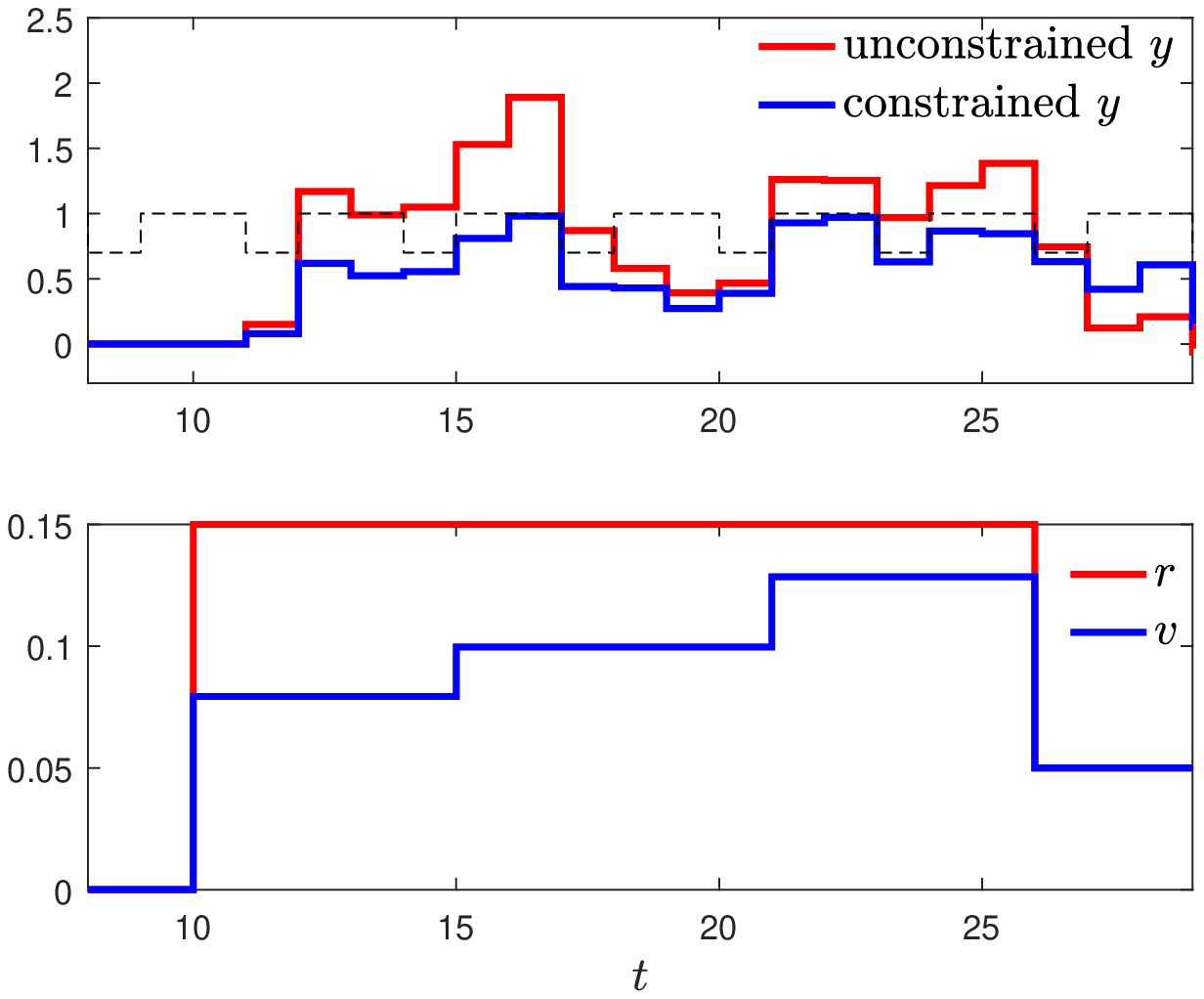}
        \caption{Formulation 1 (Fixed input)}
        \label{fig:ex2_1}
    \end{subfigure}%
    \quad\quad\quad
    \begin{subfigure}[t]{0.44\textwidth}
        \centering
        \includegraphics[width=\textwidth]{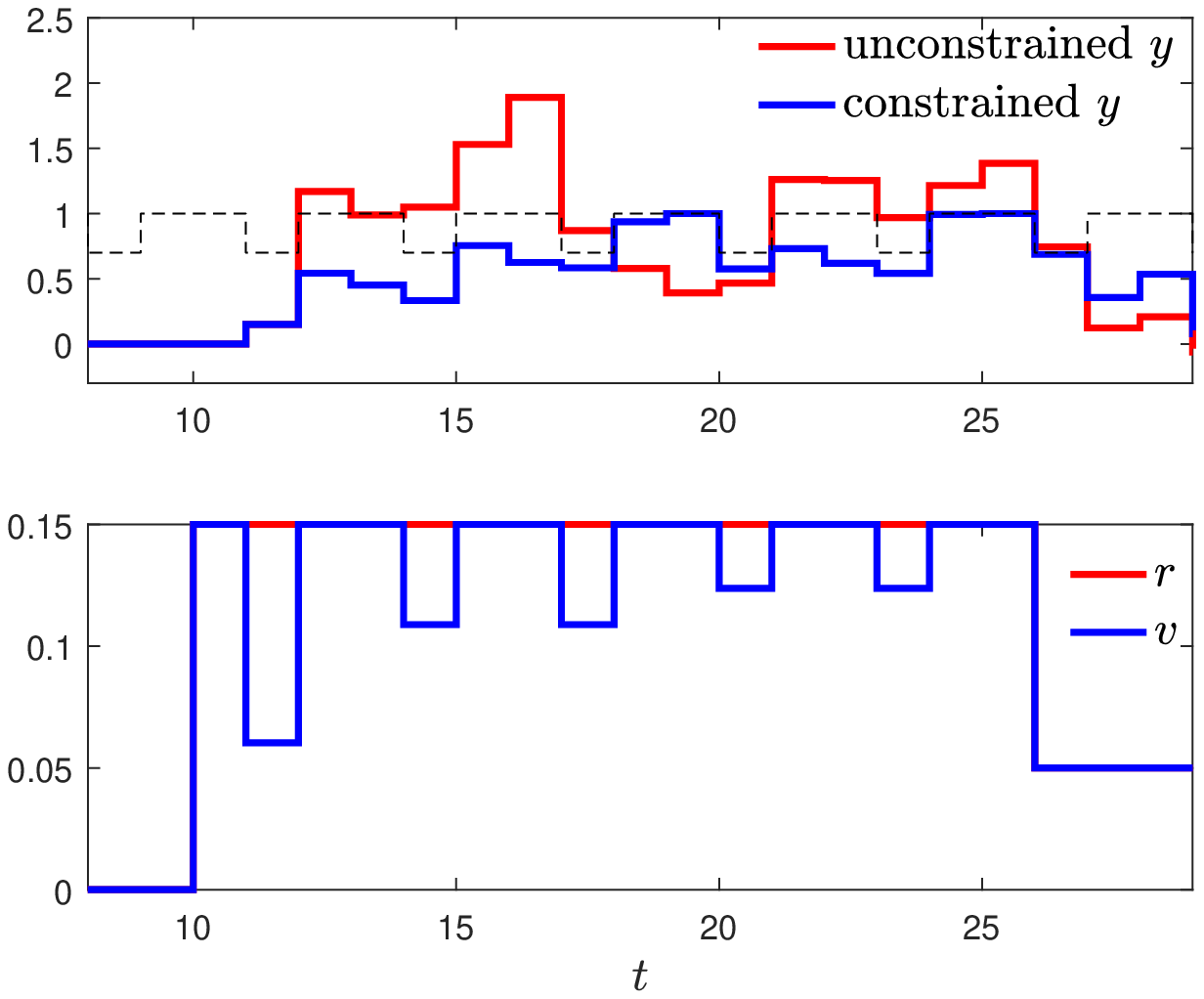}
        \caption{Formulation 2 (Periodic input)}
        \label{fig:ex2_2}
    \end{subfigure}%
 
    \caption{RG example in Section IV. The left and right plots show the response of the first and second formulations, respectively. In both plots, the applied reference signal is a step from 0 to 0.15 at $t=10$ followed by a step down to 0.05 at $t=26$. The dashed black lines represent the constraint. As can be seen, the unconstrained (i.e., ungoverned) output (red curve) violates the constraints. However, the constrained output (i.e., governed output, shown by blue curve) satisfies the constraint in both cases. Note that formulation 1 exploits periodicity only in its use of $\Omega_k^\epsilon$. In formulation 2, however, periodicity is also exploited in $v(t)$. As a result of this additional complexity, $v(t)$ more closely follows $r(t)$, which is desired.}
            \label{fig:ex2}
\end{figure*}

\subsection{Formulation 2 -- Periodic Input}
A second, richer formulation of RG can be developed by taking full advantage of the periodic nature of the system. Specifically, define $v_0(t), \ldots, v_{N-1}(t)$ to be  $N$ new states and let $v(t)=v_{mod(t,N)}(t)$ (i.e., $v(t)$ is periodically assigned to $v_k$'s). To compute the MAS, we assume $v_k$'s are dictated by constant dynamics:
$$
v_k(t+1) = v_k(t),\,\,\, k=0,\ldots,N-1.
$$
Defining $x_{aug} = [{x}^\top, v_0, \ldots, v_{N-1}]^\top$, the augmented system has the form \eqref{eq:sys}, where
$$
A_k = \left[ \begin{array}{cc} \bar{A}_k & \bar{B}_k e_{k+1}^\top \\ \textbf{0} & I_N \end{array} \right], \,\, C_k = [\bar{C}_k\,\,\,\, \bar{D}_ke_{k+1}^\top].
$$
The vectors $e_1,\ldots,e_{N}$ are the standard basis vectors in $\mathbb{R}^N$. The sets $\Omega_k^\epsilon$ can be computed for this system using Algorithm 1 and Corollary 1, for some $0 < \epsilon \ll 1$.

Let the current timeslot be denoted by $\tau=mod(t,N)$. In this formulation, at each $t$, RG updates only $v_\tau$, leaving the remaining $v_k$'s unchanged. Specifically, at each timestep, RG solves the following linear program in the variable $\kappa$: \par
{\small
\begin{equation*}
\begin{aligned}
&\underset{\kappa}{\text{maximize}} & & \kappa \\
& \text{subject to} & & \kappa \in [0,1],  \\ &&& (x_0,v_0,\ldots,v_{N-1}) \in \Omega_{\tau}^\epsilon,\,\,\, \tau = mod(t,N) \\ &&& v_\tau = v_\tau(t-1) + \kappa(r(t)-v_\tau(t-1))\\ &&& v_i = v_i(t-1), i \neq \tau \\
&&&x_0 = {x}(t)\\
\end{aligned}
\end{equation*}}%
Note that $x(t)$, $v_k(t-1)$, $k=0,\ldots,N-1$, and $r(t)$ are known parameters in this optimization program. Once the optimal $\kappa$ is found, RG updates $v_\tau$ and applies the updated value to $v(t)$ as follows:
$$
v_\tau(t) = v_\tau(t-1)+\kappa (r(t)-v_\tau(t-1)), \,\,\,\, v(t) = v_\tau(t).
$$
To solve the above optimization problem, the constraints can be simplified and an algorithm similar to Algorithm 2 can be developed. Furthermore, stability and recursively feasibility can be proven for this formulation.  Since the approach is similar to that used in the first formulation, we do not present these results for the sake of brevity. 

For this formulation,  the partial storage approach also leads to  memory savings at the expense of additional computing needs:
$$
\text{Total memory savings} = 4(N-1)(n+3)(m-n) \text{ bytes}
$$
$$
\text{Additional arithmetic operations: } n(6m+2n-1)
$$
Clearly, the trends are similar to those in Formulation 1.

The performance of this RG is illustrated in Fig. \ref{fig:ex2_2} for the same $r(t)$ used in the example in Section \ref{sec:1} (shown in Fig. \ref{fig:ex2_1}). In contrast to formulation 1, this formulation takes advantage of the periodicity of the system to more effectively shape $v(t)$ to prevent constraint violation. Specifically, the governed reference is allowed to vary and repeat periodically within the period, leading to $v(t)$'s that are closer to $r(t)$. Note that, for this example, $m$ is computed to be 24, so an extension of Algorithm 2 for this formulation would converge in 24 iterations. Furthermore, under complete storage, 1560 byte of memory are required to store $\Omega_k^\epsilon$'s, whereas under partial storage, only 680 byte are required. This is a an improvement of 880 bytes (56\% improvement), which comes at the cost of an increase in the number of real-time multiplications and additions by 294.

\begin{remark}
A comparison between formulations 1 and 2 is in order. Note that formulation 1 results in $v(t)$'s that converge to a constant value, whereas formulation 2 results in $v(t)$'s that may periodically repeat at steady state. Therefore, from a practical standpoint, formulation 2 is advantageous in situations where different setpoints are desired in different timeslots. Examples are multi-rate systems and cyclic control systems, where a distinct plant is controlled in each timeslot. From an implementation perspective, formulation 1 augments the dynamics with 1 additional state whereas formulation 2 introduces $N$ new states. This implies that the memory and computational overhead of formulation 2 are much higher. For instance, in the example presented in Fig. \ref{fig:ex2}, under complete storage, formulation 1 requires 846 bytes of memory to store $\Omega_k^\epsilon$'s, whereas formulation 2 requires 1560 bytes, an increase by  84\%.
\end{remark}

\begin{remark}
The ideas presented in this paper can be extended to multi-input periodic systems, wherein $v(t)$ and $r(t)$ are vectors of dimension greater than 1. However, care must be taken with the formulation of RG in these cases. Following formulation 2, one can let $v_\tau (t) = v_\tau (t-1) + \kappa (r(t)-v_\tau (t-1))$, where $\kappa$ is now a diagonal matrix with $\kappa_0, \ldots, \kappa_{g-1}$ on the diagonals, with $g$ being the dimension of $v$. The values of $\kappa_i$ are obtained by solving an optimization problem. One may be tempted to maximize $\sum \kappa_i$ as a proper extension of the single variable case. However, this formulation is not robust as it may lead to non-unique solutions. Therefore, we suggest using a quadratic program instead. For example, one could minimize $\sum \|v_i-r_i\|^2$  subject to $\kappa_i \in [0,1]$ and the state and input belonging to the periodic MAS of the augmented system. Note that this reasoning is similar to the case of vector reference governors \cite{Kolmanovsky2014}, where, rather than a linear program, a quadratic program is solved to optimize $\kappa$ over multiple channels. 
\end{remark}

\section{Conclusions}

This paper considers reference governors and the maximal output admissible sets (MAS)  for linear periodic systems. We extend the earlier results in the literature to the case of Lyapunov stable periodic systems with output constraints, showing that an inner approximation of MAS for these systems is finitely determined. Furthermore, we study the relationship between these sets and show that they are related via simple transformations. The significance of this result is that, instead of all $N$ sets, only one needs to be computed and stored in memory for the purpose of control. This reduces the memory requirements of a reference governor scheme, at the expense of additional computing requirements. We present a thorough analysis of this trade-off. In the second part, we present two RG formulations, discuss their feasibility criteria and implementation, and illustrate their efficacy using numerical simulations. Complete definitions, proofs, and algorithms are included, with the hope that both practitioners and researchers can find the results of this paper accessible.

Future work includes: robust RG for periodic systems under unknown disturbances, robust RG for periodic systems under polytopic uncertainty, RG with output feedback, and extension of the work to command governors. A practical application will appear in a future publication.





\section*{Acknowledgments}

The author greatly appreciates the valuable discussions with Dr. Mads Almassalkhi, Dr. Uro\v{s} Kalabi\'{c}, and Dr. Ilya Kolmanovsky. 

\bibliographystyle{IEEEtran}
\bibliography{IEEEabrv,periodicRG_TAC}

\begin{IEEEbiography}
    [{\includegraphics[width=1in]{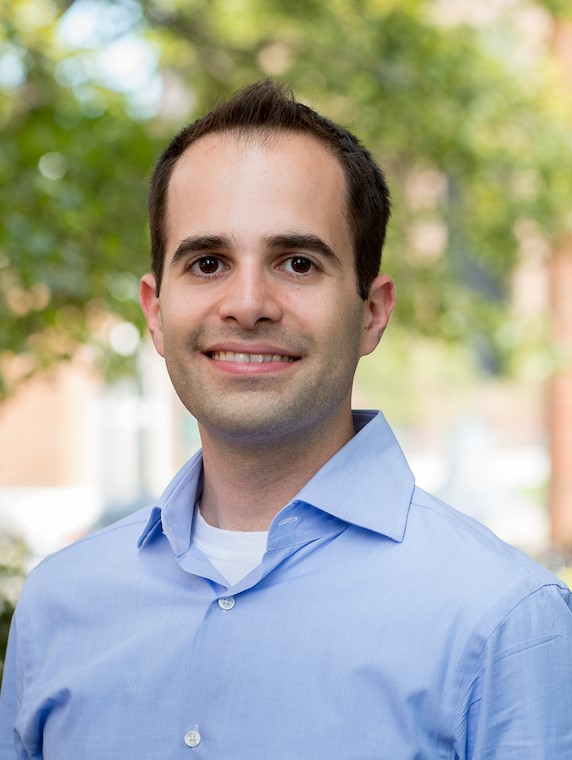}}]{Hamid R. Ossareh} obtained his BASc (EE) from the University of Toronto in 2008, and MASc (EE), MS (Mathematics), and PhD (EE) degrees from the University of Michigan, Ann Arbor in 2010, 2012, and 2013, respectively. From 2013-2016, he was with Ford Research and Advanced Engineering as a research engineer, where he investigated advanced control of automotive systems. Since 2016, he has been an assistant professor in the Department of Electrical and Biomedical Engineering at the University of Vermont. His current research interests include constraint management using reference and command governors, control of nonlinear stochastic systems, and control of periodic systems. He has published 16 articles in peer-reviewed journals and conferences and holds 11 US patents.
    
Dr. Ossareh was a recipient of the Natural Sciences and Engineering Research Council of Canada Post-Graduate Scholarship, the Towner Prize for Best Graduate Instructor from the University of Michigan, and the Chief Engineer's award from Ford Motor Company.
\end{IEEEbiography}

\end{document}